\renewcommand*\env@matrix[1][*\c@MaxMatrixCols c]{%
  \hskip -\arraycolsep
  \let\@ifnextchar\new@ifnextchar
  \array{#1}}
\newtheorem{lemma}{Lemma}[section]
\newtheorem{theorem}[lemma]{Theorem}
\newtheorem{proposition}[lemma]{Proposition}
\newtheorem{prop}[lemma]{Proposition}
\newtheorem{cor}[lemma]{Corollary}
\newtheorem{claim*}{Claim}
\newtheorem{thm}[lemma]{Theorem}
\newtheorem*{thm*}{Theorem}
\newtheorem{defn}[lemma]{Definition}
\theoremstyle{remark}
\newtheorem{remark}[lemma]{Remark}
\newtheorem{remarks}[lemma]{Remarks}
\newcommand{\G}{{\mathbb G}}
\newcommand{\PP}{{\mathbb P}}
\newcommand{\C}{{\mathbb C}}
\newcommand{\F}{{\mathbb F}}
\newcommand{\Q}{{\mathbb Q}}
\newcommand{\RR}{{\mathbb R}}
\newcommand{\Z}{{\mathbb Z}}
\newcommand{\kbar}{{\overline{k}}}
\newcommand{\kk}{{\mathbf k}}
\newcommand{\calF}{{\mathcal F}}
\newcommand{\OO}{{\mathcal O}}
\DeclareMathOperator{\HH}{H}
\DeclareMathOperator{\hh}{h}
\DeclareMathOperator{\rk}{rk}
\DeclareMathOperator{\im}{im}
\DeclareMathOperator{\Aut}{Aut}
\DeclareMathOperator{\Gal}{Gal}
\DeclareMathOperator{\Br}{Br}
\DeclareMathOperator{\Pic}{Pic}
\DeclareMathOperator{\bPic}{{\bf Pic}}
\DeclareMathOperator{\Spec}{Spec}
\DeclareMathOperator{\Frac}{Frac}
\DeclareMathOperator{\PGL}{PGL}
\DeclareMathOperator{\et}{et}
\DeclareMathOperator{\rank}{rank}
\DeclareMathOperator{\Diag}{Diag}
\DeclareMathOperator{\disc}{disc}
\DeclareMathOperator{\Bl}{Bl}
\newcommand{\Deltatilde}{\tilde{\Delta}}
\newcommand{\alphatildem}[1]{\tilde{\alpha}^{(#1)}}
\newcommand{\Ptilde}{\tilde{P}}
\newcommand{\Ptildem}[1]{\tilde{P}^{(#1)}}
\newcommand{\Pm}[1]{{P}^{(#1)}}
\newcommand{\piDelta}{\varpi}
\newcommand{\piX}{\pi}
\newcommand{\piQuadricSurface}{\piX_1}
\newcommand{\Span}{\operatorname{Span}}
\numberwithin{equation}{section}
\numberwithin{table}{section}
\newcommand{\defi}[1]{\textsf{#1}} 
\title[Conic bundles differing by a constant Brauer class and rationality]{Conic bundle threefolds differing by a constant Brauer class and connections to rationality}
\author{Sarah Frei}
\address{Department of Mathematics, Rice University, 6100 S.\ Main St., Houston, TX 77005, USA}
\email{sarah.frei@rice.edu}
\urladdr{https://math.rice.edu/\~{}sf31}
\author{Lena Ji}
\address{Department of Mathematics, University of Illinois Urbana-Champaign, 214 Harker Hall, 1305 W. Green Street, Urbana, IL 61801}
\email{lenaji.math@gmail.com}
\urladdr{https://lji.web.illinois.edu/}
\author{Soumya Sankar}
\address{Mathematical Institute, Utrecht University, Hans Freudenthal building, Budapest 6, 3584 CD Utrecht, The Netherlands}
\email{s.sankar@uu.nl}
\urladdr{https://sites.google.com/site/soumya3sankar/}
\author{Bianca Viray}
\address{University of Washington, Department of Mathematics, Box 354350, Seattle, WA 98195, USA}
\email{bviray@uw.edu}
\urladdr{http://math.washington.edu/\~{}bviray}
\author{Isabel Vogt}
\address{Brown University, Department of Mathematics, Box 1917, 151 Thayer Street, Providence, RI 02912, USA}
\email{ivogt.math@gmail.com}
\urladdr{https://www.math.brown.edu/ivogt/}
\keywords{Conic bundles, the Brauer group, rationality,  intermediate Jacobians}
\subjclass[2020]{Primary: 14C25. Secondary: 14E08, 14G27, 14H40, 14K30.}
\begin{document}

\begin{abstract}
   A double cover \(Y\) of \(\PP^1 \times \PP^2\) ramified over a general \((2,2)\)-divisor will have the structure of a geometrically standard conic bundle ramified over a smooth plane quartic \(\Delta \subset \PP^2\) via the second projection.  These threefolds are rational over algebraically closed fields; however, over nonclosed fields, including \(\RR\), their rationality is an open problem.
   In this paper, we characterize rationality over \(\RR\) when \(\Delta(\RR)\) has at least two connected components (extending work of M. Ji and the second author) and over local fields when all odd degree fibers of the first projection have nonsquare discriminant.

   We obtain these applications by proving general results comparing the conic bundle structure on \(Y\) with the conic bundle structure on a well-chosen intersection of two quadrics. The difference between these two conic bundles is encoded by a constant Brauer class, and we prove that this class encodes the obstruction to the existence of a section of the first projection \(Y\to\PP^1\).
\end{abstract}
\maketitle
\section{Introduction}


For threefolds over algebraically closed fields, much is known about the rationality problem (i.e., the property of being birational to projective space), see e.g.~\cite{IskovskikhProkhorov}. However, the classical rationality obstructions cannot always detect irrationality over nonclosed fields. In this direction,
Hassett--Tschinkel and Benoist--Wittenberg introduced a rationality obstruction over nonclosed fields \(k\) that refines the classical Clemens--Griffiths intermediate Jacobian obstruction~\cite{clemensgriffiths} by considering certain torsors under the intermediate Jacobian~\cites{bw-cg, HT-intersection-quadrics, bw-ij, HT-cycle}.  
Subsequent works show that this refined \defi{intermediate Jacobian torsor (IJT) obstruction} is strong enough to characterize rationality for many classes of Fano threefolds with \(k\)-points \cites{HT-intersection-quadrics, bw-ij, KP-Fano-3folds-rank1}.

In an earlier paper \cite{FJSVV}, the authors studied the IJT obstruction for geometrically standard conic bundle threefolds over \(\PP^2\) with singular fibers over a smooth plane quartic \(\Delta\), and proved that although the IJT obstruction characterizes rationality for these degree \(4\) conic bundles over fields \(k\) with \(\Br k[2] = 0\), the IJT obstruction fails to characterize rationality over any subfield of \(\RR\).  The root cause of this discrepancy is that the intermediate Jacobian torsors of a conic bundle \(\pi\colon X \to \PP^2\) depend only on the discriminant cover \(\Deltatilde\to \Delta\) of \(\pi\), whereas the isomorphism class of the generic fiber \(X_{\eta}\) depends on the discriminant cover \emph{and} the constant \(2\)-torsion Brauer class given by a smooth fiber at any point \(w\in(\PP^2\smallsetminus\Delta)(k)\).

Thus, one may ask whether the vanishing of the IJT obstruction for a degree \(4\) conic bundle \(X\to \PP^2\) implies the rationality of \emph{some} degree \(4\) conic bundle \(X'\to \PP^2\) with the same discriminant cover \(\Deltatilde/\Delta\) as \(X\).  
We prove that this indeed occurs when the
 \defi{Prym curve}\footnote{The intermediate Jacobian of \(X\) is the Prym variety of \(\Deltatilde/\Delta\), which, in the case \(\deg \Delta = 4\), is isomorphic to the Jacobian of an explicit genus \(2\) curve \(\Gamma_{\Deltatilde/\Delta}\) with equation given in \eqref{eq:Gamma} \cite{FJSVV}*{Definition 4.6}.} \(\Gamma_{\Deltatilde/\Delta}\) has a \(k\)-point (see Corollary~\ref{cor:intro-inline}).

The question then becomes: What geometric information is captured by the constant Brauer class \([X_{w}] - [X'_{w}]\)? 
The primary goal of this paper is to answer this question for the following important class of Fano threefolds with a degree \(4\) conic bundle structure.
Let \(\Delta\subset \PP^2\) be a smooth plane quartic and let \(\varpi\colon \Deltatilde\to \Delta\) be a smooth projective geometrically integral \'etale double cover.  Then there exists a double cover \(Y = Y_{\Deltatilde/\Delta}\to \PP^1\times \PP^2\) branched over a \((2,2)\)-divisor such that the second projection has the structure of a (geometrically standard) conic bundle with discriminant cover \(\Deltatilde\to \Delta\) (see Section~\ref{sec:obstruction-to-sections} for details).  The first projection \(\piQuadricSurface\) realizes \(Y_{\Deltatilde/\Delta}\) as a quadric surface fibration; in particular, \(Y_{\Deltatilde/\Delta}\) is rational as soon as \(\piQuadricSurface\) has a section.

\begin{thm}\label{thm:intro-mainthm-1}
    Over a field \(k\) of characteristic different from \(2\), let \(\varpi\colon \Deltatilde\to\Delta\) be a geometrically integral \'etale double cover of a smooth plane quartic and let \(X \colonequals Y_{\Deltatilde/\Delta}\) be the conic bundle described above. Assume that the Prym curve \(\Gamma_{\Deltatilde/\Delta}\) has a \(k\)-point and that the IJT obstruction for \(X \) vanishes.
    Then there is a conic bundle \(\psi \colon X' \to \PP^2\) with discriminant cover \(\Deltatilde/\Delta\), such that 
    \begin{enumerate}
        \item The threefold \(X'\) is rational, and 
        \item    \(
    [X_{\eta}] - [X'_{\eta}]\in \Br k[2]
    \) 
    is trivial if and only if \(\piQuadricSurface\colon X\to\mathbb P^1\) has a section.
    \end{enumerate}
\end{thm}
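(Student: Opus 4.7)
The plan is to construct $X'$ as a well-chosen intersection of two quadrics in $\PP^5$, use the intermediate Jacobian torsor hypothesis to force $X'$ to be rational, and then identify the constant Brauer class $\alpha \colonequals [X_\eta] - [X'_\eta] \in \Br k[2]$ as the obstruction to a section of $\piQuadricSurface$.

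First, I would construct $X'$ using the $k$-point $p \in \Gamma_{\Deltatilde/\Delta}(k)$. The Prym curve parametrizes (up to an appropriate equivalence) pencils of quadrics in $\PP^5$ whose associated discriminant cover is $\Deltatilde \to \Delta$, so a $k$-rational point gives us the data to descend such a pencil to $k$. The base locus of this pencil is a smooth intersection of two quadrics $X' \subset \PP^5_k$; projection from a ruling of one of the quadrics gives a standard conic bundle structure $\psi \colon X' \to \PP^2$ with discriminant cover $\Deltatilde \to \Delta$. Because the intermediate Jacobian of a standard conic bundle depends only on its discriminant cover, the IJT obstruction for $X'$ agrees with that of $X$ and so also vanishes; known rationality criteria for intersections of two quadrics (e.g.\ Hassett--Tschinkel, Benoist--Wittenberg) then yield that $X'$ is rational over $k$.

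For part~(2), since $X$ and $X'$ have the same discriminant cover $\Deltatilde/\Delta$, their generic fibers are conics over $k(\PP^2)$ whose associated double covers agree; hence $\alpha \colonequals [X_\eta] - [X'_\eta]$ indeed lies in $\Br k[2]$. To identify $\alpha$ with the section obstruction, I would show that $\alpha$ coincides with the constant part of the Brauer class of the generic fiber $Q_\eta$ of $\piQuadricSurface$, a smooth quadric surface over $k(\PP^1)$. A section of $\piQuadricSurface$ gives a $k(\PP^1)$-point on $Q_\eta$, which for a smooth quadric surface is equivalent to the vanishing of its Brauer--Severi class; conversely, triviality of this class forces $Q_\eta \cong \PP^1_{k(\PP^1)} \times \PP^1_{k(\PP^1)}$, so $Q_\eta$ has a rational point which extends to a section of $\piQuadricSurface$ by properness.

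The main obstacle I anticipate is the identification $\alpha = [Q_\eta]$. One approach is to exploit the fact that the $(2,2)$-equation presents $Y$ simultaneously as a conic bundle over $\PP^2$ and as a quadric surface fibration over $\PP^1$, so that the even Clifford algebra of the conic bundle structure on $X = Y$, compared to that of $X'$, pulls back along $\piQuadricSurface$ to a class equal to $[Q_\eta]$ over the generic point of $\PP^1$. Alternatively, one can aim to produce an explicit birational comparison between $Y$ and $X'$ over an open of $\PP^1 \times \PP^2$ that directly exhibits the difference of conic bundle structures as the Brauer class of the quadric surface fibration.
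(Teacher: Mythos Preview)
Your outline has the right architecture---build $X'$ from an intersection of two quadrics attached to the point of $\Gamma(k)$, use Hassett--Tschinkel/Benoist--Wittenberg for rationality, and identify the constant difference $\alpha$ with the section obstruction for $\piQuadricSurface$---and this is indeed what the paper does. But two of your steps are not yet proofs, and one contains a real error.

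First, the section obstruction for the quadric surface bundle $\piQuadricSurface$ is \emph{not} a Brauer class over $k(\PP^1)$: a smooth quadric surface is not a Severi--Brauer variety, and having a $k(\PP^1)$-point is not equivalent to the vanishing of a single class in $\Br k(\PP^1)$. The correct invariant is the class $[\calF_1(Y/\PP^1)]$ of the relative Fano variety of lines, which lives in $\Br\Gamma$ (the Stein factorization of $\calF_1(Y/\PP^1)\to\PP^1$ passes through $\Gamma$). So the target of your identification is $\Br\Gamma$, not $\Br k(\PP^1)$, and your sentence ``triviality of this class forces $Q_\eta\cong\PP^1_{k(\PP^1)}\times\PP^1_{k(\PP^1)}$'' is false as stated.

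Second, and more seriously, you never explain why this class in $\Br\Gamma$ is \emph{constant}, i.e.\ in the image of $\Br k$. Without this, knowing $\alpha\in\Br k$ is zero only tells you that $[\calF_1(Y/\PP^1)]$ vanishes at one point of $\Gamma(k)$, not everywhere, so you cannot conclude that $\piQuadricSurface$ has a section. The paper proves constancy (its Theorem~\ref{thm:brauer-class-fano-variety-of-lines}) by first showing that for any $x\in\Ptildem{1}(k)$ the pullback of $[Y_\eta]$ along the line $\ell_x$ is already constant in $\Br k$, and then comparing splitting fields with $[\calF_1(Y/\PP^1)]$ via Amitsur's theorem. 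This is exactly where the IJT-vanishing hypothesis enters part~(2): together with $\Gamma(k)\neq\emptyset$ it gives $\Ptildem{1}(k)\neq\emptyset$, which is what makes the constancy argument run. Your proposal uses the IJT hypothesis only for the rationality of $X'$.

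Finally, your construction of $X'$ is too loose to compute $\alpha$. The paper writes down explicit $6\times6$ symmetric matrices $A_0,A_\infty$ from $M_1,M_2,M_3$ (after normalizing so that $\disc Q_1$ is a square, which is what the point of $\Gamma(k)$ buys), checks that $Z=V(q_0,q_\infty)$ contains a smooth conic $C$ in a coordinate plane, and that projection from that plane makes $\Bl_C Z\to\PP^2$ a conic bundle whose generic fiber has class $(Q_1,Q_2^2-Q_1Q_3)+[Q_1]$. Since $[Y_\eta]=(Q_1,Q_2^2-Q_1Q_3)$, one reads off $\alpha=[Q_1]$ directly; and $[Q_1]$ is the value of $[\calF_1(Y/\PP^1)]$ at the chosen point of $\Gamma(k)$ because the fiber of $\piQuadricSurface$ over $[1:0]$ is $z^2=Q_1$. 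Your ``projection from a ruling'' does not pin down $X'$ well enough to make this computation.
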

Since \( [X_{\eta}] - [X'_{\eta}]\) can be non-trivial in general, the conic bundles
\(X\to\mathbb P^2\) and \(X'\to\mathbb P^2\) in Theorem~\ref{thm:intro-mainthm-1} may be different. 
In fact, as shown in \cite{FJSVV}, the vanishing of the IJT obstruction for \(X\) does not imply rationality of \(X\).

Theorem~\ref{thm:intro-mainthm-1} raises the question of whether conic bundles \(X\to \PP^2\), \(X'\to \PP^2\) that have the same discriminant cover, but whose generic fibers are not isomorphic, can \emph{both} be rational. In other words, can \(\PP^3\) have more than one isomorphism class of conic bundle structure with the same discriminant cover? If not, then Theorem~\ref{thm:intro-mainthm-1} would imply that \(Y_{\Deltatilde/\Delta}\to \PP^1\times \PP^2\) is rational if and only if \(\piQuadricSurface\) has a section.  In general, this question seems difficult to answer, but we study it in two (non-disjoint) cases: when \(\Deltatilde\) has index \(1\) and when \(k = \RR\).
\begin{theorem}[{Proposition~\ref{prop:DeltatildeNonempty} and Theorem~\ref{thm:MultipleConicBundleStrOverR}}]\label{thm:IntroMultipleConicBundles}
    Let \(k\) be a field of characteristic different from \(2\), let \(\varpi\colon \Deltatilde\to \Deltatilde\) be a geometrically integral \'etale double cover of a smooth plane curve, and let \(\pi\colon X\to \PP^2\) and \(\pi'\colon X \to \PP^2\) be conic bundles with  discriminant cover \(\Deltatilde/\Delta\). Assume at least one of the following conditions:
    \begin{enumerate}
        \item \(\Deltatilde\) has index \(1\), i.e., there is an odd degree extension \(F/k\) with \(\Deltatilde(F) \neq\emptyset\); or \label{it:MultipleConicBundleStrOverR-part-1}
        \item \(k = \RR\), the sets \(X(\RR)\) and \(X'(\RR)\) are nonempty and connected, and \(\Delta(\RR)\subset\PP^2(\RR)\) is not a single homotopically trivial simple closed curve (i.e., not a single \defi{oval}).\label{it:MultipleConicBundleStrOverR-part-2}
    \end{enumerate}
    Then the generic fibers of \(\pi\) and \(\pi'\) are isomorphic, i.e., \([X_{\eta}] = [X'_{\eta}] \in \Br \kk(\PP^2)\).
\end{theorem}
\begin{remarks}\hfill
\begin{enumerate}
\item Condition \eqref{it:MultipleConicBundleStrOverR-part-1} holds if \(k=\RR\) and \(\deg\Delta\) is odd by Proposition~\ref{prop:ImageOfRealPoints} (this can also be deduced from the Faddeev exact sequence applied to \(X\) restricted to a line).
\item Over \(\RR\), any smooth projective rational variety must have connected and nonempty real points~\cite{DelfsKnebusch}*{Theorem~13.3}. Thus the above theorem implies that if \(\PP^3_{\RR}\) has two nonisomorphic conic bundle structures with the same discriminant cover, then the real points of the discriminant curve must be a single oval and \(\Deltatilde(\RR) = \emptyset\); this can only occur when \(\deg\Delta = 4\)~\cite{beauville-ij}*{Th\'eor\`eme 4.9}.
\item When \(\Delta\) is a smooth plane quartic,  the condition that \(\Deltatilde\) has index \(1\) is, to the best of our knowledge, the only known sufficient rationality criterion that depends \emph{only} on the discriminant cover (see \cite{FJSVV}*{Section 8.1} and Corollary~\ref{cor:deg-4-index-1-deltatilde}). 
\end{enumerate}
\end{remarks}

We prove Theorem~\ref{thm:intro-mainthm-1} by constructing \(X'\) as a smooth complete intersection of two quadrics. As a byproduct of this explicit construction, we are also able to give an explicit description of the constant Brauer class \([X_{\eta}] - [X'_{\eta}]\). This explicit constant Brauer class does not directly connect to the Brauer class that obstructs the existence of a section. To connect the two, we construct another Brauer class on one of the intermediate Jacobian torsors (denoted \(\Ptildem{1}\)) of \(X\). When this torsor has a rational point, we prove that the Brauer class on \(\Ptildem{1}\) is constant and agrees with both the Brauer class obstructing a section of \(\pi_1\) and (when \(\Gamma_{\Deltatilde/\Delta}(k)\neq\emptyset\)) the Brauer class \([X_{\eta}] - [X'_{\eta}]\). Using this Brauer class on \(\Ptildem{1}\), we obtain the following result over local fields\footnote{The archimedean local fields are \(\RR\) and \(\C\) and the nonarchimedean local fields are all finite extensions of \(\Q_p\) or \(\F_p((t))\).} when \(\Gamma_{\Deltatilde/\Delta}\) has index \(2\).

\begin{thm}[Corollary~\ref{cor:localfield_index2}]
    Let \(k\) be a local field of characteristic different from \(2\) and let \(\Deltatilde/\Delta\) be a geometrically integral \'etale double cover of a smooth plane quartic.  If \(\Gamma_{\Deltatilde/\Delta}\) has index \(2\), then \(Y_{\Deltatilde/\Delta}\) is rational if and only if the IJT obstruction vanishes.
\end{thm}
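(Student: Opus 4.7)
The plan is to derive this corollary from Theorem~\ref{thm:intro-mainthm-1} combined with the arithmetic of local fields. Set $X \colonequals Y_{\Deltatilde/\Delta}$ and $\Gamma \colonequals \Gamma_{\Deltatilde/\Delta}$. Necessity (rationality implies IJT vanishing) follows from the standard Benoist--Wittenberg framework and requires neither the local field hypothesis nor the index $2$ input. We concentrate on the converse: assume the IJT obstruction vanishes, and aim to show that $X$ is rational.

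The principal case is $\Gamma(k)\neq\emptyset$ (which is already permitted by the index $2$ hypothesis). In this case Theorem~\ref{thm:intro-mainthm-1} directly produces a rational conic bundle $X'/k$ with the same discriminant cover as $X$, and rationality of $X$ reduces to the vanishing of the constant Brauer class $\beta \colonequals [X_\eta]-[X'_\eta]\in\Br(k)[2]$.  When $\Gamma(k)=\emptyset$ but the index is exactly $2$, a degree $2$ closed point $P\in\Gamma$ has residue field a quadratic extension $L/k$ with $\Gamma(L)\neq\emptyset$, and Theorem~\ref{thm:intro-mainthm-1} applied over $L$ produces a rational $X'_L$ with the same discriminant cover as $X_L$. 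I would then use Galois descent, via the $\Gal(L/k)$-action on $P$ and the Galois-equivariance of the construction of $X'_L$ from the point of $\Gamma(L)$, to produce a conic bundle $X'/k$ with the same discriminant cover as $X$, reducing once more to the vanishing of $\beta \in \Br(k)[2]$.

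In either case, the remaining step is to show $\beta=0$.  Since $k$ is a local field of characteristic $\neq 2$, we have $|\Br(k)[2]|\leq 2$, so this is a binary check.  Using the description of the Brauer class difference in terms of Prym data developed in the body of the paper, combined with local Tate duality for the Prym variety $\Jac(\Gamma) = \IJac(X)$, the class $\beta$ should be controlled by a pairing between $\Jac(\Gamma)(k)$ and a Galois cohomological invariant.  The IJT vanishing hypothesis furnishes $k$-points of the relevant Prym torsor, while the index $2$ hypothesis furnishes a $k$-point of $\Pic^2(\Gamma)$; together these should force $\beta$ to lie in the trivialized subgroup of $\Br(k)[2]$, hence to vanish.

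I expect the main obstacle to be the descent step in the case $\Gamma(k)=\emptyset$: one must exhibit a $\Gal(L/k)$-equivariant structure on $X'_L$ compatible with the discriminant datum over $k$, since the construction of $X'$ in Theorem~\ref{thm:intro-mainthm-1} depends explicitly on the chosen $k$-rational point of $\Gamma$.  The local field hypothesis is essential here precisely because it forces the $2$-torsion of $\Br(k)$ to be so small that the binary obstruction $\beta$ can be pinned down by the modest cohomological input provided by an index $2$ divisor on $\Gamma$.
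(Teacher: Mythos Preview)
Your proposal has a basic misreading of the hypothesis that derails the whole strategy.  ``Index \(2\)'' means that \(\Gamma(L)=\emptyset\) for \emph{every} odd-degree extension \(L/k\); in particular \(\Gamma(k)=\emptyset\) always.  Your ``principal case'' is therefore vacuous, and Theorem~\ref{thm:intro-mainthm-1} never applies over \(k\).  You are then forced entirely into the descent-from-\(L\) branch, which you yourself flag as problematic (the construction of \(X'\) in Theorem~\ref{thm:intro-mainthm-1} genuinely depends on a chosen point of \(\Gamma\), so there is no evident \(\Gal(L/k)\)-equivariance to exploit), and your final ``\(\beta=0\) via Tate duality'' step is not an argument but a hope.

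The paper avoids all of this by not going through Theorem~\ref{thm:intro-mainthm-1} at all.  Instead it uses Corollary~\ref{cor:brauer-class-fano-variety-of-lines}, which only requires \(\Ptildem{1}(k)\neq\emptyset\) (a much weaker condition than \(\Gamma(k)\neq\emptyset\)) together with the vanishing of \(\alphatildem{1}\) in \(\Br\Gamma\) (not in \(\Br k\)).  The two arithmetic inputs are then: (i) over a local field one has \(\bPic^1_\Gamma(k)\neq\emptyset\) by \cite{Lichtenbaum}, so by Proposition~\ref{prop:IJTforConicBundles} the IJT vanishing is equivalent to \(\Ptildem{1}(k)\neq\emptyset\) and \(\alphatildem{1}\) is defined; and (ii) since \(\Gamma\) has index \(2\) but \(\bPic^1_\Gamma(k)\neq\emptyset\), the cokernel of \(\Pic(\Gamma)\to\bPic_\Gamma(k)\) has a nontrivial \(2\)-torsion element, so the Hochschild--Serre sequence forces the map \(\Br k[2]\to\Br\Gamma\) to be identically zero (as \(\Br k[2]\simeq\Z/2\Z\)).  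Hence \(\alphatildem{1}\) automatically lies in \(\ker(\Br k\to\Br\Gamma)\), Corollary~\ref{cor:brauer-class-fano-variety-of-lines} gives a section of \(\piQuadricSurface\), and \(Y\) is rational.  Note that the paper proves only that \(\alphatildem{1}\) dies in \(\Br\Gamma\), not that it vanishes in \(\Br k\); this weaker statement is what is actually needed, and it is exactly what the index-\(2\) hypothesis buys.
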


Over the real numbers, we leverage this Brauer class on \(\Ptildem{1}\) together with the topology of the real points of the conic bundle to prove the following rationality characterization when the real locus of the discriminant curve is not a single oval.
\begin{thm}\label{thm:realmain}
Let \(k = \RR\) and and let \(\Deltatilde/\Delta\) be a geometrically integral \'etale double cover of a smooth plane quartic.  Assume that \(\Delta(\RR)\) is not a single oval.  Then \(Y_{\Deltatilde/\Delta}\) is rational if and only if \(\piQuadricSurface\) has a section.
\end{thm}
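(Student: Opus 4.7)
The two implications I treat separately. If $\piQuadricSurface$ admits a section, then the generic fiber of the quadric surface fibration $Y_{\Deltatilde/\Delta} \to \PP^1_\RR$ is a smooth quadric surface over $\RR(t)$ with a rational point, hence $\RR(t)$-rational; it follows that $Y_{\Deltatilde/\Delta}$ is birational to $\A^3_\RR$, giving the easy implication.

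For the converse, suppose $Y \colonequals Y_{\Deltatilde/\Delta}$ is $\RR$-rational. The plan is to invoke Theorem~\ref{thm:intro-mainthm-1} and then use the topology of real loci to eliminate a Brauer class. Rationality of $Y$ implies the vanishing of the IJT obstruction. To apply Theorem~\ref{thm:intro-mainthm-1} we must also verify $\Gamma_{\Deltatilde/\Delta}(\RR) \neq \emptyset$; I expect this to follow from the explicit equation of the genus-$2$ curve $\Gamma_{\Deltatilde/\Delta}$ referenced in the introduction under the topological hypothesis on $\Delta(\RR)$, with $\Delta(\RR) = \emptyset$ handled as an easier separate case. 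Theorem~\ref{thm:intro-mainthm-1} then produces a rational conic bundle $\psi\colon X' \to \PP^2$ with the same discriminant cover $\Deltatilde/\Delta$, and reduces the problem to showing that
\[
\alpha \colonequals [X_\eta] - [X'_\eta] \in \Br(\RR)[2] = \Z/2
\]
is trivial.

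To force $\alpha = 0$, I would pass to real loci. For $w \in \PP^2(\RR) \smallsetminus \Delta(\RR)$, the fibers $X_w$ and $X'_w$ are smooth real conics whose isomorphism class is locally constant on each connected component of $\PP^2(\RR) \smallsetminus \Delta(\RR)$. If $\alpha \neq 0$, then on every such component exactly one of $X_w(\RR), X'_w(\RR)$ is empty, so $X(\RR)$ and $X'(\RR)$ project onto \emph{complementary} unions of components of $\PP^2(\RR) \smallsetminus \Delta(\RR)$ (up to contributions from the fibers over $\Delta(\RR)$). Under the hypothesis that $\Delta(\RR)$ is not a single oval, the combinatorics of this partition should force at least one of $X(\RR), X'(\RR)$ to violate a topological necessary condition for the real locus of a smooth rational threefold---for instance, a constraint on the number of connected components, orientability, or the rank of $H^*(\,\cdot\,(\RR),\Z/2)$ of the Kollár type---contradicting the rationality of both $X$ and $X'$. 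Hence $\alpha = 0$, and Theorem~\ref{thm:intro-mainthm-1}(2) produces the desired section.

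The main obstacle will be the final topological step: pinning down a real-topological invariant that cleanly distinguishes a rational conic bundle from its nontrivial Brauer twist and behaves uniformly across all the allowed oval configurations of $\Delta(\RR)$. The fact that the single-oval case is explicitly excluded from the hypothesis strongly suggests that this invariant is too weak to characterize rationality in that topology, so the argument must genuinely exploit the multiple-oval (or empty) combinatorics to reduce uniformly to the vanishing of $\alpha$.
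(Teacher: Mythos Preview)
Your overall architecture matches the paper's: reduce to Theorem~\ref{thm:intro-mainthm-1}, then kill the constant Brauer class \(\alpha\) via the real topology of the two conic bundles. The specific invariant the paper uses is exactly the one you guess first---connectedness of the real locus of a rational threefold---and the endgame is clean: if \(\alpha\neq 0\), then \(\pi(Y(\RR))\) and \(\psi(X'(\RR))\) are complementary regions of \(\PP^2(\RR)\) bounded by \(\Delta(\RR)\smallsetminus\varpi(\Deltatilde(\RR))\); since both \(Y\) and \(X'\) are rational, both images are nonempty and connected, and a Jordan-curve argument (the paper's Proposition~\ref{prop:ImageOfRealPoints} and Corollary~\ref{cor:ConnectedImageAndComplement}) forces \(\Delta(\RR)\) to be a single simple closed curve.

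There is, however, a genuine gap in your reduction. You write that you ``expect'' \(\Gamma_{\Deltatilde/\Delta}(\RR)\neq\emptyset\) to follow from the hypothesis on \(\Delta(\RR)\), but this is false: there exist examples with \(\Delta(\RR)\) not a single oval and \(\Gamma(\RR)=\emptyset\) (indeed, the paper's Remark~\ref{rem:Real}(iv) points to such examples where \(Y\) is rational yet \(\alphatildem{1}\) is nontrivial, which forces \(\Gamma(\RR)=\emptyset\)). So Theorem~\ref{thm:intro-mainthm-1} is not directly available in that case, and your topological argument cannot even get started. The paper handles \(\Gamma(\RR)=\emptyset\) by a completely different, non-topological route: then \(\Gamma\) has index~\(2\), the map \(\Br\RR\to\Br\Gamma\) is identically zero by Hochschild--Serre, and the IJT obstruction alone already forces \(\piQuadricSurface\) to have a section (this is Corollary~\ref{cor:localfield_index2}, which works over any local field). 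You should split off this case explicitly rather than hoping it does not occur.
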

For further equivalent rationality conditions over \(\RR\), see Theorem~\ref{thm:real} (which includes Theorem~\ref{thm:realmain} as a special case) and Proposition~\ref{prop:real-equiv-rationality-criteria}.

\subsection{Outline}  We begin in Section~\ref{sec:discriminant_covers} with a brief review of conic bundles, focusing on the discriminant cover \(\varpi\colon \Deltatilde\to \Delta\) of a conic bundle and the possible isomorphism classes of the generic fiber.  In this section we prove Theorem~\ref{thm:IntroMultipleConicBundles}\eqref{it:MultipleConicBundleStrOverR-part-1}.
In Section~\ref{sec:etale_double_covers} we review results from~\cite{FJSVV} that rephrase the IJT obstruction for conic bundles whose discriminant cover is a geometrically integral \'etale double cover of a smooth plane quartic in terms of torsors under the Prym variety of \(\Deltatilde\to \Delta\); in doing so, we also give equations for the Prym curve \(\Gamma_{\Deltatilde/\Delta}\).  In Section~\ref{section:reconstruction}, we show that under the assumption that the Prym curve has a rational point, there exists a conic bundle with discriminant cover \(\Deltatilde\to \Delta\) that is the blow-up along a smooth conic of a complete intersection of two quadrics. In Section~\ref{sec:alpha}, we construct a conic bundle over one of the Prym torsors from Section~\ref{sec:IJTBackground} and show that if the torsor is trivial, then this conic bundle is constant.  This construction is used in Section~\ref{sec:obstruction-to-sections} to study the obstruction to the existence of a section to \(\pi_1\colon Y_{\Deltatilde/\Delta}\to \PP^1\).  These ingredients are combined in Section~\ref{sec:proof} to prove Theorem~\ref{thm:intro-mainthm-1}.  We end in Section~\ref{sec:real} by specializing to the case \(k=\RR\) and proving Theorems~\ref{thm:IntroMultipleConicBundles}\eqref{it:MultipleConicBundleStrOverR-part-2} and~\ref{thm:realmain}.

\section*{Acknowledgements}
We thank David Saltman for an intriguing question and helpful follow-up discussion that motivated this project, and Asher Auel, Brendan Hassett, and Olivier Wittenberg for helpful comments. We also thank the anonymous referees for their comments and corrections which have improved the paper.

This material is based partially upon work supported by National Science Foundation grant DMS-1928930 while the last two authors were in residence at the Simons Laufer Mathematical Sciences Institute in Berkeley, California, during the Spring 2023 semester. 
S.F. was supported in part by an AMS-Simons travel grant, NSF grant DMS-2607398, and by the Hausdorff Research Institute for Mathematics, funded by the Deutsche Forschungsgemeinschaft (DFG, German Research Foundation) under Germany's Excellence Strategy – EXC-2047/1 – 390685813. L.J. was supported in part by NSF MSPRF grant DMS-2202444, NSF grant DMS-2501990, and gift SFI-MPS-TSM-00013959 from the Simons Foundation. S.S. was supported by the Dutch Research Council (NWO) grant OCENW.XL21.XL21.011. B.V. was supported in part by an AMS Birman Fellowship and NSF grant DMS-2101434. I.V. was supported in part by NSF grants DMS-2200655 and DMS-2338345. The computer algebra software \texttt{Magma}~\cite{Magma} was used to verify algebraic manipulations during proofs. For the purpose of open access, a CC BY-NC-ND public copyright license is applied to any Author Accepted Manuscript version arising from this submission.

    \subsection*{Notation}

    Throughout, \(k\) denotes a field of characteristic different from \(2\). Fix an algebraic closure \(\overline{k}\) of \(k\).
    A variety over \(k\) is a separated \(k\)-scheme of finite type, and a curve is a variety of pure dimension one.
    If \(Y\) is an integral \(k\)-variety, \(\kk(Y)\) denotes its function field. More generally, if \(Y\) is a
finite union of integral \(k\)-varieties \(Y_i\), then \(\kk(Y) \colonequals \prod_i \kk(Y_i)\) is the ring of global sections
of the sheaf of total quotient rings.

    For a smooth variety \(Y\) over a field \(k\), we write \(\Br Y\) for the (cohomological) Brauer group \(\HH^2_{\et}(Y, \G_m)\).  If \(Y= \Spec R\) is affine we write \(\Br R \colonequals \Br \Spec R\).  For a field \(k\), the group \(\Br k\) can also be viewed as describing the Morita equivalence classes of Severi--Brauer varieties over \(k\).  Given a smooth conic \(C\) over \(k\), we write \([C]\in \Br k[2]\) for the associated Brauer class.  For two smooth conics \(C\) and \(C'\) over \(k\), we have \(C \simeq C'\) if and only if \([C] = [C']\).
    
    For a quadratic form \(Q\in k[\mathbf{x}]\), the symmetric matrix \(M\) associated to \(Q\) is defined to be the matrix such that \(\mathbf{x}M\mathbf{x}^T = Q\); the rank of the quadratic form \(Q\) is the rank of the matrix \(M\). Given a \(\rank(Q)\)-dimensional linear space \(L\) for which \(M|_L\) has full rank, the isomorphism class of the quadratic form \(\mathbf{x}(M|_L)\mathbf{x}^T\) is independent of the choice of \(L\). The \defi{discriminant} of \(Q\) is defined by \(\disc(Q) = (-1)^{n \choose 2} \det(M|_L) \in k^{\times}/k^{\times2}\), where \(n = \dim_k L\) is the rank of \(Q\). This square class is independent of the choice of \(L\).
    In this way, a rank \(3\) quadric defines a smooth plane conic \(\mathbf{x}(M|_L)\mathbf{x}^T\), which has an associated class in \(\Br k [2]\).  Given a rank \(2\) quadric, we may associate the Brauer class given by the conic \(V(z^2 - \mathbf{x}(M|_L)\mathbf{x}^T) \subset \PP^2\). 
    
    For \(f, g \in \kk(\PP^2)\), we write \((f, g)\) for the corresponding class in \(\Br \kk(\PP^2)[2]\). If instead, \(F, G\) are homogeneous ternary forms of even degree, then any non-trivial linear form \(\ell\) defines a class \((F/\ell^{\deg{F}}, G/\ell^{\deg{G}}) \in \Br \kk(\PP^2)[2]\). Since this class is independent of the choice of \(\ell\), by slight abuse of notation, we denote it by \((F,G)\).
    
    The Brauer group \(\HH^2_{\et}(-, \G_m)\) is a contravariant functor, and given a morphism of smooth varieties \(f\colon Y \to Z\) we write \(\alpha_Y \colonequals f^*\alpha\) for \(\alpha\in \Br Z\).  Given any point \(z\in Z\) we write \(Y_z\) for the fiber of \(f\) over \(z\).  

    For a smooth connected curve \(C\),
    we write \(\bPic_C\) for the \defi{Picard scheme} of \(C\) and write \(\bPic_C^i\) for the connected component that parametrizes degree \(i\) line bundles.

    Throughout the paper, we will write \(\eta\) for the generic point of \(\PP^2\).

\section{Conic bundles and discriminant covers}\label{sec:discriminant_covers}

    The central objects of interest in this paper are \defi{conic bundles} over \(\PP^2\), i.e., the class of smooth projective varieties \(X\) with a flat morphism \(\pi\colon X \to \PP^2\) of relative dimension \(1\) such that \(\omega_X\) is relatively anti-ample. This implies that locally over \({\PP^2}\), the threefold \(X\) is defined by a quadratic form and that the fiber of \(\pi\) over any \(w\in \PP^2\) is a conic in \(\PP^2\) (possibly of rank less than \(3\) if \(w\) is not the generic point). 
    These and other background results on conic bundles can be found in \cite{ProkhorovSurvey}*{Section 3}  and \cite{Sarkisov}*{Section 1}.
    
    There is a curve \(\Delta \subset \PP^2\) with the property that the fiber over \(w\in \PP^2\) is a smooth conic if and only if \(w\not\in\Delta\). 
    Furthermore, there is a curve \(\Deltatilde\) that is a double cover of \(\Delta\) and that parametrizes the irreducible components of the fibers of \(\pi \colon X_{\Delta}\to \Delta\). The curve \(\Deltatilde\) can be obtained by considering the Stein factorization of the relative variety \(\mathcal F_1(X_{\Delta}/\Delta)\to \Delta\subset \PP^2\) of lines in the fibers of \(\pi\). We call \(\Delta\) the \defi{discriminant locus} of \(\pi\) and \(\varpi\colon \Deltatilde\to \Delta\) the \defi{discriminant cover} of \(X\to \PP^2\). If \(\Delta\) is smooth, then every fiber of \(X \to \PP^2\) has rank at least \(2\), and so \(\varpi\) is \'etale. 
    
    A conic bundle over \(k\) is \defi{standard} if \(\Pic X = \pi^*\Pic \PP^2 \oplus \Z\); this is equivalent to \(\Deltatilde\) and \(\Delta\) having the same number of irreducible components. Throughout the paper, we reserve \(\pi\colon X \to \PP^2\) for a geometrically standard conic bundle, i.e., the base change \(X_\kbar \to \PP^2_\kbar\) is standard, which implies that \(X\to \PP^2\) is standard as well.

    \subsection{Discriminant covers and isomorphism classes of conic bundles}

    Discriminant covers are crucial to the study of conic bundles and control much of their geometry.  Indeed, over fields \(k\) with \(\Br k[2] = 0\), the discriminant cover determines the isomorphism class of \(X_{\eta}\).  Over arbitrary fields, we study the isomorphism class of \(X_{\eta}\) via its class \([X_\eta]\) in \(\Br \kk(\PP^2)[2]\) by considering the purity exact sequence~\cite{CTS-Brauer-book}*{Theorems 3.7.3 and 6.1.3}
    \[
    0 \longrightarrow \Br k[2] \longrightarrow \Br \kk(\PP^2)[2] \xrightarrow{\;(\partial_z)_z\;} \bigoplus_{z \in (\PP^2)^{(1)}} H^1(\kk(z), \Z/2\Z),
    \]
    where \((\PP^2)^{(1)}\) denotes the set of irreducible divisors in \(\PP^2\).
    By definition of the residue maps \(\partial_z\)~\cite{CTS-Brauer-book}*{Definition 1.4.11}, the residue \(\partial_z([X_{\eta}])\) is trivial for every \(z\) outside of the support of \(\Delta\). Furthermore, if \(\kk(\Deltatilde)_{z}\) denotes the function field of the component of \(\Deltatilde\) lying over \(z\), 
    then \(\partial_z([X_{\eta}]) = \kk(\Deltatilde)_{z}\) for every \(z\in (\PP^2)^{(1)}\) in the support of \(\Delta\).  Thus, if \(\pi\colon X \to \PP^2\) and \(\pi'\colon X'\to \PP^2\) are two standard conic bundles with the same discriminant cover, then
    \begin{equation}\label{eq:constant}
        [X_{\eta}] - [X'_{\eta}] \in \im (\Br k[2] \hookrightarrow \Br \kk(\PP^2)[2]).
    \end{equation}
    Further, this difference agrees with the image of the difference \([X_w] - [X'_w]\) for any \(w\in (\PP^2\smallsetminus\Delta)(k)\).
    However, as the following proposition shows, for a fixed discriminant cover,
     not every element of \(\Br k[2]\)
     can be realized as a difference of two standard conic bundles with
     this discriminant cover.
     
\begin{prop}\label{prop:DeltatildeNonempty}
    Let \(\varpi\colon \Deltatilde\to \Delta\) be an \'etale double cover of smooth, projective, geometrically integral curves with \(\Delta\) a plane curve over a field \(k\) of characteristic different from \(2\).  If \(\Deltatilde\) has index \(1\), i.e., if \(\Deltatilde(F)\neq\emptyset\) for some odd degree extension \(F/k\),
    then any conic bundles \(\pi\colon X \to \PP^2\), \(\tilde\pi\colon \tilde{X} \to \PP^2\) with discriminant cover \(\Deltatilde/\Delta\) must have isomorphic generic fibers. That is, \([X_\eta]-[\tilde{X}_\eta] =0 \in\Br\kk(\PP^2)\).
\end{prop}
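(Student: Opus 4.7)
The plan is to show that the class $\alpha := [X_\eta] - [\tilde X_\eta]$, which belongs to $\Br k[2]$ by the discussion leading up to \eqref{eq:constant}, is trivial. I would carry this out in two main steps.

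\emph{Step 1: reduction to $\Deltatilde(k) \neq \emptyset$.} Since $\Deltatilde$ has index $1$, there is a closed point $P \in \Deltatilde$ whose residue field $L := \kappa(P)$ has odd degree $n$ over $k$. The corestriction-restriction formula $\Cor_{L/k} \circ \Res_{L/k} = n \cdot \id_{\Br k}$, combined with the fact that $\alpha$ is $2$-torsion and $n$ is odd, gives $\alpha = 0$ if and only if $\Res_{L/k}(\alpha) = 0$. After base changing to $L$ (which preserves smoothness, geometric integrality, the \'etale double cover, and the conic bundle structure), we may assume $\Deltatilde(k) \neq \emptyset$.

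\emph{Step 2: specialization via a $k$-rational component of a singular fiber.} Fix $P \in \Deltatilde(k)$ with $w_0 := \varpi(P) \in \Delta(k)$; the covering involution of $\varpi$ is defined over $k$, so the other preimage $P'$ is also $k$-rational. The fiber $X_{w_0}$ thus decomposes as a union $\ell_P \cup \ell_{P'}$ of two $k$-rational lines, each isomorphic to $\PP^1_k$. If $k$ is finite then $\Br k = 0$ by Wedderburn and there is nothing to prove, so I assume $k$ is infinite and choose a general $k$-rational line $M \subset \PP^2$ through $w_0$ transverse to $\Delta$ at every point of $M \cap \Delta$. The base change $X_M := X \times_{\PP^2} M$ is then a smooth conic bundle surface over $M \cong \PP^1_k$, and $\ell_P \subset X_M$ is a $k$-rational $(-1)$-curve. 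Contracting $\ell_P$ over $k$ yields a smooth model $X_M^\circ \to M$ whose generic fiber agrees with that of $X_M$ (since $\ell_P$ lies in a single fiber) but whose fiber over $w_0$ is the image of $\ell_{P'}$, a smooth $\PP^1_k$. By the standard specialization of Brauer classes at smooth fibers, the class $[X_\eta]|_{\kk(M)} = [X_M^\circ]_{\eta_M}$ is unramified at $w_0$ and specializes to $[\PP^1_k] = 0 \in \Br k$. The analogous construction applied to $\tilde X$ gives $[\tilde X_\eta]|_{\kk(M)}\big|_{w_0} = 0$. Because $\alpha$ is a constant class, its specialization at $w_0$ equals $\alpha$ itself, so
\[
\alpha \;=\; [X_\eta]|_{\kk(M)}\big|_{w_0} - [\tilde X_\eta]|_{\kk(M)}\big|_{w_0} \;=\; 0.
\]

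The main obstacle will be setting up the local model carefully: one needs to verify that $X_M$ is smooth (from transversality of $M$ with $\Delta$ at each intersection point, using that the tangent direction to $\Delta$ at a smooth point is a single line in $\PP^2$), that $\ell_P$ is indeed a $(-1)$-curve (standard for conic bundle surfaces over smooth curves), and that the contraction produces a fiber over $w_0$ that is a genuine smooth split conic, so the specialization formula for Brauer classes applies.
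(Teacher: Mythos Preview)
Your proof is correct and takes a genuinely different, more geometric route than the paper. Both arguments begin with the same reduction to \(\Deltatilde(k)\neq\emptyset\) via restriction--corestriction (the paper phrases this as invariance of the index of a \(2\)-primary class under odd-degree extensions). From there the approaches diverge.

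The paper argues by contradiction that for nonzero \(\alpha\in\Br k[2]\) the class \([X_\eta]+\alpha\) cannot be represented by a conic at all: by Albert's theorem it would share a quadratic splitting field \(L/\kk(\PP^2)\) with \([X_\eta]\), and analyzing the integral closure of \(\OO_{\PP^2,\Delta}\) in \(L\) (via the valuative criterion applied to \(X\)) forces \(\alpha\) to die in \(\Br\kk(\Deltatilde)\) or \(\Br\kk(\Delta)\); the rational point on \(\Deltatilde\) then makes \(\Br k\to\Br\Deltatilde\) injective, so \(\alpha=0\).

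Your argument instead restricts to a line \(M\) through \(w_0=\varpi(P)\) and specializes both \([X_\eta]|_{\kk(M)}\) and \([\tilde X_\eta]|_{\kk(M)}\) at \(w_0\): because each fiber over \(w_0\) splits into two \(k\)-rational lines, contracting one produces a smooth model with special fiber \(\PP^1_k\), so each class specializes to \(0\) and the constant difference \(\alpha\) is forced to vanish. This avoids Albert's theorem and the splitting-field analysis entirely, and is arguably more elementary. The paper's approach, by contrast, proves the marginally sharper intermediate statement that \([X_\eta]+\alpha\) has index \(>2\) for \(\alpha\neq 0\) (i.e., is not represented by \emph{any} conic over \(\kk(\PP^2)\), not merely by one arising from a conic bundle with discriminant cover \(\Deltatilde/\Delta\)).
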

\begin{proof}
    Let \(\pi\colon X \to \PP^2\), \(\tilde\pi\colon \tilde{X} \to \PP^2\) be  conic bundles with discriminant cover \(\Deltatilde/\Delta\).  Then \([X_{\eta}] - [\tilde{X}_{\eta}] \in \im (\Br k[2] \to \Br \kk(\PP^2)[2])\) by~\eqref{eq:constant}.  We will show that for any nontrivial \(\alpha\in \Br k[2]\), the class \([X_{\eta}] + \alpha\) has no quadratic splitting field, which implies that \([X_{\eta}] + \alpha\) is \emph{not} represented by a conic.  Thus, \([X_{\eta}] - [\tilde{X}_{\eta}]\) must equal \(0\in \Br\kk(\PP^2)\), as desired.

    The statement that \([X_{\eta}] + \alpha\) has no quadratic splitting field is equivalent to the statement that the index of \([X_{\eta}] + \alpha\) is \(2^n\) for some \(n>1\). Since the index of a \(2\)-primary Brauer class is unchanged under odd degree extensions~\cite{GS}*{Corollary 4.5.11}, we may reduce to the case that \(\Deltatilde(k)\neq\emptyset\).

    We now return to proving that \([X_{\eta}] + \alpha\) has no quadratic splitting field, assuming that \(\Deltatilde(k)\neq\emptyset\).  By a result of Albert~\cite{Lam-quadratic-forms}*{Chapter III, Theorem 4.8}, \([X_{\eta}] + \alpha\) has a quadratic splitting field if and only if \(\alpha\) and \([X_{\eta}]\) have a common quadratic splitting field \(L/\kk(\PP^2)\).  Assume that there exists such an \(L\).  Let \(R\) denote the local ring \(\OO_{\PP^2, \Delta}\) and identify \(\kk(\PP^2) = \Frac(R)\); let \(S\) denote the integral closure of \(R\) in \(L\).  Since \(L\) splits \(X_{\eta}\) and \(\pi\colon X \to \PP^2\) is proper, the valuative criterion for properness gives a morphism \(\Spec S \to X_R\).  If \(R\hookrightarrow S\) is unramified, then the image of \(\Spec S \to X_R\) cannot meet the singular locus of the special fiber of \(X_R\to \Spec R\).  In particular, since \(X_{\kk(\Delta)}\) is \(\kk(\Delta)\)-birational to \(\PP^1_{\kk(\Deltatilde)}\) (by definition of the discriminant cover), 
    if \(R\hookrightarrow S\) is an unramified extension, then the extension of residue fields agrees with \(\kk(\Deltatilde)/\kk(\Delta)\). Thus, we have the following commutative diagrams:
\begin{center}
    \begin{tikzcd}
    L \arrow[r, hookleftarrow] &S \arrow[r]  & \kk(\Deltatilde) \\
    &R  \arrow[u, hook] \arrow[r] & \kk(\Delta)\arrow[u, hook]\\
    &k \arrow[u, hook] \arrow[ur, hook]
    \end{tikzcd}
    \(\quad\)and\(\quad\)
    \begin{tikzcd}
    \Br L \arrow[hookleftarrow, r]& \Br S \arrow[r] & \Br \kk(\Deltatilde) \\
    & \Br R  \arrow[u] \arrow[r] & \Br \kk(\Delta)\arrow[u]\\
     &  \Br  k \arrow[u, hook] \arrow[ur, hook]
    \end{tikzcd}.
\end{center}
    By assumption \(L\) splits \(\alpha\), so the diagram on the right shows that \(\alpha\) becomes trivial when pulled back to \(\kk(\Deltatilde)\).  The pullback map factors as \(\Br k \to \Br \Deltatilde \hookrightarrow \Br \kk(\Deltatilde)\).  Since \(\Deltatilde(k)\neq\emptyset\), \(\Br k \to \Br \Deltatilde\) is injective, 
    so \(\alpha=0\in\Br k\).

    In the case that \(R\hookrightarrow S\) is ramified, then the residue field of \(S\) is \(\kk(\Delta)\).  Since \(\Deltatilde\) is a cover of \(\Delta\), we have \(\Delta(k)\neq\emptyset\) and so we can repeat the above argument with \(\Delta\) replacing \(\Deltatilde\).
    \end{proof}

In the case that \(\Delta\) has degree \(4\), we prove the following strengthening of \cite{FJSVV}*{Proposition 8.1(ii)}.

\begin{cor}\label{cor:deg-4-index-1-deltatilde}
    In the setting of Proposition~\ref{prop:DeltatildeNonempty}, assume \(\Delta\) is a smooth plane quartic and \(\Deltatilde\) has index \(1\). Then any conic bundle \(X\to\PP^2\) with discriminant cover \(\Deltatilde/\Delta\) is rational.
\end{cor}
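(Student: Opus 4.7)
The plan is to prove that $[X_\eta] = 0$ in $\Br\kk(\PP^2)[2]$, from which rationality of $X$ follows immediately: once the generic fiber (a conic) has a $\kk(\PP^2)$-rational point, $X_\eta \cong \PP^1_{\kk(\PP^2)}$ and $\kk(X) \cong \kk(\PP^2)(t)$ is purely transcendental over $k$, so $X$ is birational to $\PP^1\times\PP^2$.

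By Proposition~\ref{prop:DeltatildeNonempty}, the class $[X_\eta] \in \Br\kk(\PP^2)[2]$ depends only on the discriminant cover $\Deltatilde/\Delta$ and not on the particular choice of conic bundle $X$ over $\PP^2$ with that discriminant cover. Since $\Deltatilde$ has index $1$, I would choose an odd degree extension $L/k$ with $\Deltatilde(L) \neq \emptyset$. Over $L$, the conic bundle $X_L \to \PP^2_L$ has discriminant cover $\Deltatilde_L/\Delta_L$ with $\Deltatilde_L(L) \neq \emptyset$, so \cite{FJSVV}*{Proposition 8.1(ii)} applies to $X_L$ and yields $L$-rationality; combined with the first paragraph applied over $L$ this gives $[X_{L,\eta}] = 0 \in \Br\kk(\PP^2_L)$.

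To descend back to $k$, I would exploit that $[X_\eta]$ is $2$-torsion (as the Brauer class of a conic) while its restriction under the odd degree extension $\kk(\PP^2_L)/\kk(\PP^2)$ vanishes. Invariance of the index of $2$-primary Brauer classes under odd degree extensions (\cite{GS}*{Corollary 4.5.11}, exactly as invoked in the proof of Proposition~\ref{prop:DeltatildeNonempty}) then forces $[X_\eta] = 0$, completing the argument.

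The main potential obstacle is verifying the implication ``$X_L$ is $L$-rational $\Rightarrow [X_{L,\eta}] = 0$,'' which is not automatic for arbitrary rational conic bundles. The cleanest way around this is to invoke Proposition~\ref{prop:DeltatildeNonempty} over $L$: it identifies $[X_{L,\eta}]$ with the generic fiber class of the specific model $Y_{\Deltatilde_L/\Delta_L}$ built from the quadric surface fibration $\piQuadricSurface$, and on that model the $L$-rationality proved in \cite{FJSVV}*{Proposition 8.1(ii)} is witnessed by vanishing of the generic fiber Brauer class rather than by some auxiliary structure, so the translation from rationality to vanishing is well-defined.
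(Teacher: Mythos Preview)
Your approach has a fundamental error: the conclusion \([X_\eta]=0\in\Br\kk(\PP^2)\) is false, so the strategy cannot succeed. Recall from the purity sequence in Section~\ref{sec:discriminant_covers} that \(\partial_\Delta([X_\eta])\) is the class of the quadratic extension \(\kk(\Deltatilde)/\kk(\Delta)\) in \(H^1(\kk(\Delta),\Z/2\Z)\). Since \(\Deltatilde\) is a geometrically integral \'etale double cover of \(\Delta\), this extension is nontrivial, so \([X_\eta]\) has nontrivial residue along \(\Delta\) and hence is nonzero. Rationality of a conic bundle threefold does \emph{not} force the generic conic to split; your last paragraph acknowledges the gap but the proposed fix is incorrect: the \(L\)-rationality of \(Y_{\Deltatilde_L/\Delta_L}\) in \cite{FJSVV}*{Proposition~8.1(ii)} is witnessed by a section of the \emph{quadric surface} fibration \(\piQuadricSurface\colon Y\to\PP^1\), not by triviality of the generic fiber of the \emph{conic} bundle \(\pi\colon Y\to\PP^2\).

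The paper's proof uses Proposition~\ref{prop:DeltatildeNonempty} in a different way. One first shows that the particular model \(Y=Y_{\Deltatilde/\Delta}\) is rational over \(k\) itself: an \(L\)-point of \(\Deltatilde\) gives a section of \((\piQuadricSurface)_L\) by \cite{FJSVV}*{Proposition~6.1(v)}, and Springer's theorem \cite{Springer52} descends this to a section of \(\piQuadricSurface\) over \(k\), so \(Y\) is \(k\)-rational via the quadric surface bundle. Then Proposition~\ref{prop:DeltatildeNonempty} gives \([X_\eta]=[Y_\eta]\), which means \(X\) and \(Y\) have isomorphic generic fibers as conic bundles over \(\PP^2\), hence are birational to each other (both birational to the same conic over \(\kk(\PP^2)\)). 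Thus \(X\) is rational. No vanishing of \([X_\eta]\) is needed or true.
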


\begin{proof}
    By \cite{FJSVV}*{Section 6} there exists a threefold \(Y\) with the structure of a conic bundle with discriminant cover \(\Deltatilde/\Delta\) and the structure of a quadric fibration \(\piQuadricSurface\colon Y \to \PP^1\).
    \cite{FJSVV}*{Proposition 6.1(v)} and the assumption on \(\Deltatilde\) imply that there is an odd degree extension \(L/k\) over which \(({\piQuadricSurface})_L\colon Y_L \to \PP^1_L\) has a section. Then \(\piQuadricSurface\) has a section over \(k\) by Springer's theorem \cite{Springer52}, so \(Y\) is rational. Next, let \(X\to\PP^2\) be any conic bundle with discriminant cover \(\Deltatilde/\Delta\). Then Proposition~\ref{prop:DeltatildeNonempty} implies that \(X\) and \(Y\) are birationally equivalent, and therefore \(X\) is rational.
\end{proof}

    \subsection{Prym varieties and the IJT obstruction to rationality}\label{sec:etale_double_covers}

    The primary conic bundles \(\pi\colon X \to \PP^2\) of interest in this paper are those whose discriminant curve \(\Delta\subset \PP^2\) is a smooth plane curve (often of degree \(4\)) and whose discriminant cover \(\varpi\colon \Deltatilde \to \Delta \) is geometrically integral.  The smoothness of \(X\) and \(\Delta\) imply that \(\varpi\) is an \'etale double cover.

    The \defi{Prym variety} of \(\Deltatilde/\Delta\), which we denote by \(P\), is the connected component of the identity of \(\ker(\piDelta_* \colon \bPic_{\Deltatilde/k} \to \bPic_{\Delta/k})\); it is a principally polarized abelian variety (see \cite{mumford-prym}). The Prym variety is the intermediate Jacobian of \(X\), and can be thought of as a parameter space for algebraically trivial curve classes on \(X\)~\citelist{\cite{beauville-ij}*{Proposition 3.3}, \cite{FJSVV}*{Theorem 5.8}}. The kernel of \(\piDelta_* \colon \bPic_{\Deltatilde/k} \to \bPic_{\Delta/k}\) has a unique additional connected component, which we denote \(\Ptilde\), and it is a torsor under \(P\) (see \cite{mumford-prym}*{Sections 2 and 6}).

    \subsubsection{\'Etale double covers of smooth plane quartics}\label{sec:IJTBackground}
    If \(\Delta\subset \PP^2\) is a smooth plane quartic, then any geometrically integral \'etale double cover \(\Deltatilde\) is a smooth genus \(5\) curve.  Bruin \cite{bruin} proved that the Prym variety of such a double cover is always isomorphic to the Jacobian of a smooth genus \(2\) curve \(\Gamma = \Gamma_{\Deltatilde/\Delta}\).  This implies that on the threefold \(X\), the Clemens--Griffiths intermediate Jacobian obstruction to rationality~\cites{clemensgriffiths,bw-cg} vanishes. 
    Defining equations of \(\Delta, \Deltatilde, \Gamma\) can be given in terms of 3 quadrics \(Q_1,Q_2,Q_3\in k[u,v,w]\) as follows:
    \begin{align}
        \Delta  &=V(Q_2^2 - Q_1Q_3), \label{eq:Delta}\\
        \Deltatilde& = V(Q_1 - r^2, Q_2 - rs, Q_3 - s^2)\subset \PP^4,\textup{and} \label{eq:Deltatilde} \\
        \Gamma &= V(y^2 + \det(t_0^2M_1 + 2t_0t_1M_2 + t_1^2M_3))\subset \PP(1,1,3),\label{eq:Gamma}
    \end{align}
    (here \(M_i\) is the symmetric matrix associated to \(Q_i\)); see, for example,~\cite{bruin} and \cite{FJSVV}*{Remark 4.7}.  In addition, \(Q_1,Q_2,Q_3\) are uniquely determined up to a \(\PGL_2\)-action~\cite{FJSVV}*{Theorem 4.5(i)} which agrees with changing coordinates of \(t_0,t_1\) on \(\Gamma\). Precisely:
    \[
        \begin{pmatrix}a & b\\c & d\end{pmatrix}\cdot (Q_1,Q_2,Q_3)\mapsto
        (a^2Q_1 + 2acQ_2 + c^2Q_3, abQ_1 + (ad + bc)Q_2 + cdQ_3, b^2Q_1 + 2bdQ_2 + d^2Q_3) .
    \]
    \begin{remark}
        It is important to note that the cover \(\Deltatilde \to \Delta\) depends on the \emph{forms} \(Q_1,Q_2,Q_3\), not only the conics they define. Indeed if \(Q_1,Q_2,Q_3\) are scaled by a constant, then \(\Deltatilde\) changes by a quadratic twist. 
    \end{remark}

    The \defi{intermediate Jacobian torsor (IJT) obstruction} for \(X\), introduced by \cite{HT-intersection-quadrics} over \(\RR\) and \cite{bw-ij} over arbitrary \(k\) (see also \cite{HT-cycle} for \(k\subset\mathbb C\)), is an obstruction to rationality of \(X\) that is encoded by certain torsors under \(P\). In the case where \(\Delta\subset \PP^2\) is degree \(4\), the obstruction is encoded by \(\Ptilde\) and the following two \(P\)-torsors:
    \begin{align}
    \Pm{1} &\colonequals \{ L \in \bPic^4_{\Deltatilde/k} \colon \piDelta_*L \simeq \OO_{\Delta}(1), \hh^0(L) \equiv 0\bmod 2 \}, \;\textup{and}\label{def:P1}\\
    \Ptildem{1}  &\colonequals \{ L \in \bPic^4_{\Deltatilde/k} \colon \piDelta_*L \simeq \OO_{\Delta}(1), \hh^0(L) = 1 \}.\label{def:Ptilde1}
\end{align}
\begin{prop}\label{prop:IJTforConicBundles}
    Let \(k\) be a field of characteristic different from \(2\), and let \(\pi\colon X\to \PP^2\) be a conic bundle over \(k\) with discriminant cover \(\Deltatilde \to \Delta\) such that \(\Delta\) is a smooth quartic and \(\Deltatilde\) is geometrically integral. 
    \begin{enumerate}
        \item{\cite{bruin}*{Section 5, Case 4}} As \(P\)-torsors, \(\Pm{1}\simeq \bPic^1_{\Gamma}\).
        \item{\cite{FJSVV}*{Corollary of Theorem 6.4, Section 6.4}} The IJT obstruction vanishes for \(X\) if and only if either \(\Ptilde(k) \neq \emptyset\) or \(\Ptildem{1}(k) \neq \emptyset\). Moreover, if \(\bPic_{\Gamma}^1(k) \neq \emptyset\), then 
    \(
    \Ptildem{1}(k) \neq \emptyset \) if and only if \(\Ptilde(k) \neq \emptyset
    \),
    so in this case the vanishing of the IJT obstruction for \(X\) is equivalent to \(\Ptildem{1}(k) \neq \emptyset\).
    \end{enumerate}
\end{prop}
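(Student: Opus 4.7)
My plan is to adapt Bruin's construction to produce an explicit isomorphism of $P$-torsors $P^{(1)} \xrightarrow{\sim} \bPic^1_\Gamma$. Given $D \in P^{(1)}$, the pushforward $\varpi_* D$ is a degree $4$ divisor on $\Delta$ contained in a (unique, by B\'ezout and the genericity of the setup) line $\ell \subset \PP^2$. Via the defining equation $\Delta = V(Q_2^2 - Q_1 Q_3)$, such a line corresponds to a point $(t_0 : t_1) \in \PP^1$ parametrizing a rank-drop quadric in the pencil $t_0^2 Q_1 + 2 t_0 t_1 Q_2 + t_1^2 Q_3$, i.e., to a point of the target $\PP^1$ of the hyperelliptic projection $\Gamma \to \PP^1$ in~\eqref{eq:Gamma}. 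The parity condition $h^0(D) \equiv 0 \pmod 2$ selects one of the two preimages on $\Gamma$, producing a point of $\Gamma \subset \bPic^1_\Gamma$. The remaining task is to verify that the construction extends to a morphism of $k$-varieties and is equivariant for the action of $P \cong \Jac(\Gamma)$; the equivariance follows from Bruin's identification of the Prym variety with $\Jac(\Gamma)$ together with the compatibility of both torsors with the addition of degree-$0$ divisors supported away from the line $\ell$.

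\textbf{Plan for part (2), first assertion.} The plan is to invoke~\cite{FJSVV}*{Theorem 6.4} and the discussion in~\cite{FJSVV}*{Section 6.4}, which identify the IJT obstruction for a geometrically standard conic bundle with the non-emptiness of the $k$-points of a certain effective Prym torsor $\EffPrym$. When $\Delta$ is a smooth plane quartic and $\Deltatilde/\Delta$ is geometrically integral, the two possible ``types'' of effective representatives (corresponding to whether a degree-$4$ divisor on $\Deltatilde$ pushing forward to a line on $\Delta$ is rigid or moves in a pencil) exhaust the components, and match precisely with $\Ptilde$ and $\Ptildem{1}$. Thus the IJT obstruction vanishes iff $\Ptilde(k) \neq \emptyset$ or $\Ptildem{1}(k) \neq \emptyset$.

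\textbf{Plan for part (2), ``moreover'' statement.} The key step is to show that as $P$-torsors one has the relation
\[
\Ptildem{1} \;\simeq\; \Ptilde \wedge^P \Pm{1}
\]
in the Weil--Ch\^atelet group $H^1(k, P)$. Once this is established, the conclusion is immediate: by part (1), the assumption $\bPic^1_\Gamma(k) \neq \emptyset$ trivializes $\Pm{1}$ as a $P$-torsor, whence $\Ptildem{1} \simeq \Ptilde$, and so one has $k$-points simultaneously. To prove the torsor identity, I would argue at the level of defining conditions: a divisor in the non-identity component $\Ptilde$ of $\ker(\varpi_*)$ adds to a divisor $D \in \Pm{1}$ to give a divisor whose pushforward still lies on the same line, so that the sum lands in the ``pushforward on a line'' locus of $\bPic^4_{\Deltatilde/k}$; the remaining content is that such a translation reverses the parity of $h^0$, landing in the $\hh^0 = 1$ component rather than the $\hh^0 \equiv 0 \bmod 2$ component.

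\textbf{Main obstacle.} The subtle point I expect will require the most work is justifying the parity-flipping claim in the torsor identity above, i.e., that translating an element of $\Pm{1}$ by a class in $\Ptilde$ changes the parity of $h^0$ from even to odd (specifically, lands in the rigid $\hh^0 = 1$ component). This is essentially a Mumford-style theta-characteristic parity computation for the Prym variety; one would carry it out by reducing to the case $D = \piDelta^* E$ for some divisor $E$ on $\Delta$ of degree $2$, using that $\hh^0(\piDelta^* E)$ is controlled by $\hh^0(E) + \hh^0(E \otimes \scrL)$ where $\scrL$ is the $2$-torsion line bundle defining $\Deltatilde$, and then verifying the parity change after translation by a representative of $\Ptilde$ via a Riemann--Roch calculation on the genus $5$ curve $\Deltatilde$.
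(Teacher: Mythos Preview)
The paper does not give an independent proof of this proposition; both parts are stated with citations to \cite{bruin} and \cite{FJSVV} as their justification. So your proposal should be compared against those sources.

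Your plan for part~(2) is essentially correct and matches what is in \cite{FJSVV}. The torsor identity \(\Ptildem{1}\simeq \Ptilde\wedge^P \Pm{1}\) is exactly the structural fact underlying the ``moreover'' clause (it is part of the polarized Prym scheme formalism in \cite{FJSVV}*{Section~4}), and your identification of the parity flip as the technical heart is accurate.

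Your plan for part~(1), however, has a genuine gap. The map you describe sends \(D\in \Pm{1}\) to a point of \(\Gamma\subset\bPic^1_\Gamma\), but this cannot be an isomorphism of \(P\)-torsors: \(\Pm{1}\) and \(\bPic^1_\Gamma\) are both \(2\)-dimensional (since \(P\cong\Jac(\Gamma)\) has dimension \(g(\Gamma)=2\)), whereas \(\Gamma\) is a curve. More fundamentally, your intermediate step ``such a line corresponds to a point \((t_0:t_1)\in\PP^1\)'' is not well-defined: the lines in \(\PP^2\) form a dual \(\PP^2\), and there is no natural map from this dual \(\PP^2\) to the \(\PP^1\) parametrizing the conic pencil \(t_0^2Q_1+2t_0t_1Q_2+t_1^2Q_3\). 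The rank-drop locus of that pencil is only six points (the Weierstrass points of \(\Gamma\)), not all of \(\PP^1\), and a generic line \(\ell\) has nothing to do with it. Bruin's actual construction is more indirect: it passes through the geometry of the quadric surface fibration \(Y\to\PP^1\) (or equivalently, through a trigonal/tetragonal correspondence), and the map \(\Pm{1}\to\bPic^1_\Gamma\) is built from families of lines in the fibers of \(\piQuadricSurface\), not from a pointwise assignment \(D\mapsto\) (point of \(\Gamma\)). You would need to rethink this part from scratch, starting from the Stein factorization \(\calF_1(Y/\PP^1)\to\Gamma\to\PP^1\) mentioned in Section~\ref{sec:obstruction-to-sections}.
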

\begin{remark}
The torsors \(\Ptilde, \Pm{1}, \Ptildem{1}\) are part of a larger family of torsors, given by the  polarized Prym scheme, which can be defined for arbitrary conic bundles; see \cite{FJSVV}*{Section 4} for more details.
\end{remark}

\section{Constructing an intersection of quadrics with a  conic bundle structure}
\label{section:reconstruction}

The goal of this section is to prove the following proposition.
\begin{prop}
\label{prop:construction-intersection-of-quadrics}
    Let \(k\) be a field of characteristic different from \(2\).
    
    Given quadrics \(Q_1, Q_2,Q_3\in k[u,v,w]\) such that \(V(Q_2^2-Q_1 Q_3)\) is a smooth quartic and such that either \(Q_1\) is rank \(2\) or \(\disc(Q_1)\) is a square, there exist two quadrics \(q_0,q_\infty\in k[x_0,x_1, \dots, x_5]\) such that
    \begin{enumerate}
        \item The intersection \(Z\colonequals V(q_0, q_\infty)\subset \PP^5\) is a smooth projective threefold;
        \item The linear section \(Z\cap V(x_3,x_4,x_5)\) is a smooth conic \(C\); \label{part:smoothconic}
        \item Projection away from the plane \(V(x_3,x_4,x_5)\) induces a morphism \(\Bl_C Z \to \PP^2\), endowing \(\Bl_C Z\) with a geometrically standard conic bundle structure whose generic fiber equals
        \[
            [(\Bl_C Z)_{\eta}] = (Q_1, Q_2^2 - Q_1Q_3) + [Q_1] \in \Br \kk(\PP^2),
        \]
        where \([Q_1]\) denotes the pullback of the class in \(\Br k[2]\) corresponding to the rank \(2\) or \(3\) quadric \(Q_1\).
        In particular, the discriminant cover of the conic bundle \(\Bl_CZ \to \PP^2\) is \(\Deltatilde/\Delta\), where \(\Delta\) and \(\Deltatilde\) are as defined in Section~\ref{sec:IJTBackground}; and\label{part:genericfiber}
        \item The hyperelliptic curve \(\Gamma_Z\) arising in the Stein factorization \(\calF_2(\Bl_Z \PP^5/\PP^1) \to \Gamma_Z \to \PP^1\) of the Fano scheme of \(2\)-planes in the pencil of quadrics \(V(t_0q_\infty - t_1q_0) \subset \PP^5 \times \PP^1\) is isomorphic to \(\Gamma_{\Deltatilde/\Delta}\). \label{part:hyperelliptic}
    \end{enumerate}
    \end{prop}

\begin{remarks}\hfill
\begin{enumerate}
    \item The condition that either \(Q_1\) is rank \(2\) or \(\disc(Q_1)\) is a square guarantees that there is a \(k\)-point of \(\Gamma_{\Deltatilde/\Delta}\) lying over \([1:0]\in \PP^1\) by~\eqref{eq:Gamma}. 
    Conversely, if \(\Gamma_{\Deltatilde/\Delta}(k)\neq\emptyset\), then by an automorphism of \(\mathbb{P}^1\), we may assume that one of the rational points lies above \([1:0]\) and then (after this change of coordinates)  either \(Q_1\) will be rank \(2\) or \(\disc(Q_1)\) will be a square.
   
    \item By~\eqref{part:hyperelliptic}, the condition
    \(\Gamma_{\Deltatilde/\Delta}(k) = \Gamma_Z(k) \neq \emptyset\) is clearly necessary, since the \(2\)-plane spanned by a conic contained in \(Z\) is necessarily contained in some quadric in the pencil.  In general, a hyperelliptic curve without a point may arise from a pencil of quadrics; see \cite{BhargavaGrossWang}*{Theorem 24}. However, such pencils of quadrics do not have a conic bundle structure of the form described in the proposition.
    
    \item
      Our proof is constructive. For~\eqref{part:hyperelliptic} to hold, the rank of $q_{\infty}$ must be equal to \(3 + \rk (Q_1)\). Additionally, for~\eqref{part:smoothconic} to hold together with~\eqref{part:hyperelliptic}, the quadric \(V(q_{\infty})\) must contain \(V(x_3,x_4,x_5)\). Thus, if \(q_{\infty}\) is rank \(6\), then, after a possible change of coordinates we may assume \(q_\infty = x_0x_3 + x_1x_4 + x_2x_5\). Similarly, if \(q_\infty\) has rank \(5\), then we may assume it is equal to \(x_3^2 + x_1x_4 + x_2x_5\). The conditions on \(q_0\) are more delicate. Guided by computations of the discriminant double cover of the conic bundle associated to an intersection of quadrics, we construct a viable \(q_0\) and then prove that this construction works.
\end{enumerate}
\end{remarks}

Using Proposition~\ref{prop:construction-intersection-of-quadrics} together with work of Hassett--Tschinkel and Benoist--Wittenberg, we can deduce the assertion from the introduction.
\begin{cor}[Corollary of Proposition~\ref{prop:construction-intersection-of-quadrics} \&~\citelist{\cite{HT-intersection-quadrics}*{Theorem 36}, \cite{bw-ij}*{Theorem A}}]\label{cor:intro-inline}
    Let \(k\) be a field of characteristic different from \(2\), and let \(\pi\colon X \to \PP^2\) be a conic bundle over \(k\) with discriminant cover \(\varpi\colon\Deltatilde\to \Delta\) where \(\Delta\subset \PP^2\) is a smooth plane quartic and \(\Deltatilde\) is geometrically integral.  Assume that \(\Gamma_{\Deltatilde/\Delta}(k) \neq \emptyset\).  
    
    There exists a conic bundle \(\pi'\colon X'\to \PP^2\) with discriminant cover \(\Deltatilde/\Delta\) (namely \(X' = \Bl_C Z\) with \(Z,C\) as in Proposition~\ref{prop:construction-intersection-of-quadrics}) such that \(X'\) is rational if and only if the IJT obstruction vanishes for \(X\) (equivalently for \(X'\)).
\end{cor}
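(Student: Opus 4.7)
The plan is to reduce the rationality question for the sought conic bundle $X'$ to the established rationality characterization for smooth intersections of two quadrics in $\PP^5$.

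The first step is to translate the hypothesis $\Gamma_{\Deltatilde/\Delta}(k) \neq \emptyset$ into a statement about the defining quadrics of $\Delta$ and $\Deltatilde$. By equation \eqref{eq:Gamma}, a $k$-point of $\Gamma_{\Deltatilde/\Delta}$ is a pair consisting of $(t_0:t_1) \in \PP^1(k)$ and $y \in k$ satisfying $y^2 = \det(t_0^2 M_1 + 2t_0t_1 M_2 + t_1^2 M_3)$. Using the $\PGL_2$-action on $(Q_1,Q_2,Q_3)$ mentioned after equation \eqref{eq:Gamma} (which corresponds to a change of coordinates on $(t_0,t_1)$), I would move this point to $(1:0)$. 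This yields defining quadrics for which $\det M_1 = y^2$, i.e., $\disc(Q_1)$ is a square in $k$, exactly the hypothesis needed to invoke Proposition \ref{prop:construction-intersection-of-quadrics}.

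Next, I would apply Proposition \ref{prop:construction-intersection-of-quadrics} to these chosen quadrics, producing a smooth complete intersection of quadrics $Z \subset \PP^5$, a smooth conic $C \subset Z$, and a conic bundle $X' \colonequals \Bl_C Z \to \PP^2$ whose discriminant cover is precisely $\Deltatilde/\Delta$. Because $X' \to Z$ is a blow-up of a smooth subvariety, $X'$ and $Z$ are birational, so rationality of $X'$ is equivalent to rationality of $Z$. Now $Z$ is a smooth intersection of two quadrics in $\PP^5$, i.e., a Fano threefold of Picard rank one and degree four, to which the theorems of Hassett--Tschinkel \cite{HT-intersection-quadrics}*{Theorem 36} and Benoist--Wittenberg \cite{bw-ij}*{Theorem A} apply: $Z$ is $k$-rational if and only if its IJT obstruction vanishes.

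The remaining step is to verify that the IJT obstruction for $X$, for $X'$, and for $Z$ all coincide. The intermediate Jacobian of any conic bundle with discriminant cover $\Deltatilde/\Delta$ is the Prym variety $P$, and the relevant torsors $\Ptilde$, $\Pm{1}$, $\Ptildem{1}$ from Section \ref{sec:IJTBackground} depend only on $\Deltatilde/\Delta$. By Proposition \ref{prop:IJTforConicBundles}, combined with the hypothesis $\Gamma_{\Deltatilde/\Delta}(k) \neq \emptyset$ (which in particular forces $\bPic^1_{\Gamma_{\Deltatilde/\Delta}}(k) \neq \emptyset$), vanishing of the IJT obstruction for $X$ is equivalent to $\Ptildem{1}(k) \neq \emptyset$, and likewise for $X'$; so the two obstructions agree. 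The one subtlety I expect to require care is matching the IJT obstruction for $Z$ (as a Fano threefold, formulated in \cite{HT-intersection-quadrics, bw-ij}) with the obstruction for the conic bundle $X'$; this is exactly the content of the Prym description in \cite{FJSVV}*{Section 6} together with the birational invariance of the intermediate Jacobian torsors, and is where the cited theorems of HT/BW are deployed in their original intersection-of-quadrics formulation.
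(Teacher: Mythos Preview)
Your proposal is correct and follows essentially the same route as the paper: use the $\PGL_2$-action to arrange $\disc(Q_1)$ square, apply Proposition~\ref{prop:construction-intersection-of-quadrics} to build $Z$ and $C$, and then invoke the Hassett--Tschinkel/Benoist--Wittenberg characterization of rationality for smooth complete intersections of two quadrics in $\PP^5$. The paper's proof is terser only because it packages your final ``subtlety'' paragraph---matching the IJT obstruction for $Z$ in the HT/BW sense with the Prym-torsor formulation for the conic bundle $X'$---into a single citation to \cite{FJSVV}*{Corollary~8.3}, which is exactly the reference doing the work you describe.
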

\begin{proof}
    Since \(\Gamma_{\Deltatilde/\Delta}(k)\neq\emptyset\), by a change of coordinates on \(\PP^1\), we may assume that either \(Q_1\) is rank \(2\) or \(\disc(Q_1)\) is a square by~\eqref{eq:Gamma}.
    Then we may apply Proposition~\ref{prop:construction-intersection-of-quadrics} to obtain an intersection of quadrics \(Z\) containing a conic \(C\) such that the conic bundle morphism \(\pi'\colon \Bl_C Z \to \PP^2\) has discriminant cover \(\varpi\colon \Deltatilde\to \Delta\).  Then~\citelist{\cite{HT-intersection-quadrics}*{Theorem 36}, \cite{bw-ij}*{Theorem A}} imply that the IJT obstruction characterizes rationality for \(\Bl_C Z\) (see~\cite{FJSVV}*{Corollary 8.3} for details). 
\end{proof}

\begin{proof}[Proof of Proposition~\ref{prop:construction-intersection-of-quadrics}]
    The assumption that \(V(Q_2^2-Q_1Q_3)\) is a smooth quartic implies that the quadrics \(Q_1,Q_3\) both have rank at least \(2\). This furthermore implies that the curve \(\Deltatilde\) defined in~\eqref{eq:Deltatilde} is geometrically integral. We consider two cases for the rank of \(Q_1\). \texttt{Magma} code verifying all computational claims in the proof can be found in the ~\href{https://github.com/ivogt161/Constructing-an-intersection-of-quadrics-with-a-conic-bundle-structure}{Github repository} associated to this paper.

    \textbf{Case 1}: \(Q_1\) has rank \(3\), in which case \(\disc(Q_1) = -\det(M_1)\) is a square.  We change coordinates on \(\PP^2\) to assume that \(Q_1 = au^2 + bv^2 - abw^2\) for some \(a,b\in k^{\times}.\) Note that then \([Q_1] = (a,b)\in \Br k[2]\).
    Consider the following \(6\times6\) symmetric matrices defined in terms of the symmetric matrices \(M_i\) associated to \(Q_i\):
    \begin{equation}\label{eq:Zquadrics}
    A_0 \colonequals \begin{pmatrix} abM_1^{-1} & -M_1^{-1} M_2 \\ -M_2 M_1^{-1} & \frac{-1}{ab}(M_3 - M_2M_1^{-1}M_2)  \end{pmatrix}, \qquad  A_\infty \colonequals \begin{pmatrix} 0 & I \\ I & 0  \end{pmatrix}. 
    \end{equation}
    Let \(q_0, q_{\infty}\) be the two quadratic forms defined by these matrices, i.e., \(q_i = \textbf{x}A_i\textbf{x}^T\).  We claim that \(Z \colonequals V(q_0,q_{\infty})\subset \PP^5\) has the desired properties.

    An intersection of two quadrics in six variables is smooth and three-dimensional if and only if the discriminant polynomial \(-\det(TA_\infty-A_0)\) is separable~\cite{Reid-thesis}*{Proposition 2.1}. Using the Schur complement, we compute
    \begin{align*}    
    -\det( TA_{\infty} - A_0) & = \det(M_1^{-1}) \det(M_3 - M_2M_1^{-1}M_2 + (M_2 M_1^{-1} + TI)M_1(M_1^{-1} M_2 + TI))\\ 
    & = \det(M_1^{-1})\det(M_3 + 2TM_2 + T^2M_1).
    \end{align*}
Since \(\disc(Q_1) = -\det(M_1)\) is a square, by comparing with~\eqref{eq:Gamma}, we conclude that the hyperelliptic curve \(y^2 = -\det(TA_\infty-A_0)\) is isomorphic to \(\Gamma_{\Deltatilde/\Delta}\). In particular, the polynomial \(-\det(TA_{\infty}-A_0)\) is separable.  Hence, \(Z\) is smooth and three-dimensional, as desired. This also proves \eqref{part:hyperelliptic} in this case, since \(y^2 = \disc(Tq_\infty - q_0) =  -\det(TA_\infty-A_0)\) is an (affine) equation of the hyperelliptic curve \(\Gamma_Z\).

Since \(q_{\infty}\) is identically zero when restricted to \(V(x_3,x_4,x_5)\), the linear section of \(Z\) given by \( V(x_3,x_4,x_5)\) is equal to \(V(q_0,x_3,x_4,x_5)\) so by~\eqref{eq:Zquadrics} is given by the vanishing of the rank \(3\) form corresponding to \(M_1^{-1}\). Thus we have proved~\eqref{part:smoothconic}.  

To prove~\eqref{part:genericfiber}, we must further describe the conic bundle structure on \(\Bl_C Z\). Let \(\Lambda \colonequals V(x_3, x_4, x_5)\), and let \(\phi\colon Z \dasharrow \PP^2\) be the projection away from \(C\colonequals V(q_0) \cap \Lambda\).
Fix a point  \(p= [u: v: w] \in \PP^2\).  We obtain the fiber of \(\phi\) above \(p\) by taking the intersection \(\Span(\Lambda, [0:0:0:u:v:w])\cap Z\subset \PP^5\), removing \(\Lambda\), and then taking the Zariski closure.

Note that the restriction of \(q_{\infty}\) to \(\Span(\Lambda, [0:0:0:u:v:w])\) is a rank \(2\) quadratic form, and one can compute that the vanishing of this quadratic form is the union of \(\Lambda\) and the \(3\)-dimensional column span of the following \(6\times 4\) matrix
     \[
    B_{p} = \begin{pmatrix}
        -v & -w & 0 & 0\\
        u & 0 & -w & 0\\
        0 & u & v & 0\\
        & \textbf{0}_{3,3} & & [u,v,w]^T
    \end{pmatrix};
    \]  
    we denote the column span of this matrix \(\Lambda_p\).
    Thus the fiber of \(\phi\) above \(p\) is \(\Lambda_p\cap V(q_0)\), which is given by the rank \(3\) quadratic form associated to the \(4\times4\) symmetric matrix \(B_{p}^T A_{0} B_{p}\). Let  \(\mathbf{M}\) denote the bottom right \(3\times3\) submatrix of \(B_{p}^T A_{0} B_{p}\)
    (in the \texttt{Magma} code, \(\mathbf{M}\) is denoted by  `\textsf{bM}'). The determinants of the top right \(i\times i\) submatrices of \(\mathbf{M}\) are:
    \[
        -(u^2 - bw^2), \quad -w^2Q_1, \quad \frac{-w^2}{ab}(Q_2^2 - Q_1Q_3), \quad \textup{for }i=1,2, 3, \textup{ respectively}.
    \]
    In particular, \(\mathbf{M}\) is generically invertible, so it represents the generic fiber of the conic bundle. Note that a diagonalization of \(\mathbf{M}\) has entries that are successive ratios of these minors (where the \(0^{th}\) minor is said to be \(1\)), in other words
    \begin{align*}
        \mathbf{M} \sim_{\textup{conj}} & \Diag\left(-(u^2 - bw^2), \frac{-w^2Q_1}{-(u^2 - bw^2)}, \frac{-w^2}{-abw^2Q_1}(Q_2^2 - Q_1Q_3) \right).
    \end{align*}
    Since a diagonal matrix \(\Diag(d_1,d_2,d_3)\) corresponds to the Brauer class \((-d_1d_3,-d_1d_2)\), the Brauer class of the generic fiber is
    \[
        \left(ab(u^2 - bw^2)Q_1(Q_2^2 - Q_1Q_3), Q_1\right) = 
        \left(Q_2^2 - Q_1Q_3, Q_1\right) + \left(-Q_1, Q_1\right) + \left(-ab(u^2 - bw^2), Q_1\right).
    \]
    Furthermore, using the Brauer relations \((f,g^2) = 1\) and  \((f,g) =  (f, g(h^2 - f))\) for all \(f,g\in\kk(\PP^2)^{\times}\) and \(h\in \kk(\PP^2)\) \cite{CTS-Brauer-book}*{Lemma 1.1.6 and Proposition 1.1.8} and the identity \(bQ_1 = (bv)^2 - (-ab(u^2 - bw^2))\),
    we see that \(\left(-Q_1, Q_1\right)\) is trivial and that
    \begin{align*}
        (-ab(u^2 - bw^2), Q_1) & = (-ab(u^2 - bw^2), b) = (-ab, b) = (a, b),
    \end{align*}
    which completes the proof. 

    \textbf{Case 2}: \(Q_1\) has rank \(2\).  The proof will proceed similarly as in the rank \(3\) case with the specific equations modified slightly, so we will omit more of the algebraic details.
    
    We change coordinates on \(\PP^2\) to assume that \(Q_1 = av^2 + bw^2\) for some \(a,b\in k^{\times}.\) Note that then \([Q_1] = (a,b)\in \Br k[2]\). Since \(\Delta\) is smooth, \(\lambda \colonequals Q_2(1,0,0)\neq0\) and so by replacing \(Q_2\) and \(Q_3\) with \(-ab\lambda^{-1} Q_2\) and \((-ab\lambda^{-1})^2Q_3\), we may assume that \(Q_2(1,0,0) = -ab\).
    We will again consider \(6\times6\) symmetric matrices defined in terms of the symmetric matrices \(M_i\) associated to \(Q_i\), but we need some additional notation.  Let \(T_2\) denote the unique upper triangular matrix such that \(T_2 + T_2^{T} = 2M_2\).  For any \(2\times2\) matrix \(N\) and any \(c\in k\), we write \(c\oplus N\) for the \(3\times 3\) block diagonal matrix with \(c\) in the top left entry and \(N\) in the bottom right entry.  Let \(N_1\) denote the \(2\times 2\) diagonal matrix with entries \(a,b\) so that \(M_1 = 0\oplus N_1\).  Now define the \(6\times 6\) symmetric matrices
    \begin{equation}
    A_0  \colonequals
    \begin{pmatrix} 
    1\oplus -abN_1^{-1} & -(0\oplus N_1^{-1})T_2^{T} \\ 
    -T_2(0\oplus N_1^{-1}) & \frac{1}{ab}(M_3 + T_2(0\oplus -N_1^{-1})T_2^{T})  
    \end{pmatrix},     \quad
      A_\infty  \colonequals 
    \begin{pmatrix} 
        \mathbf{0}_{3,3} & 0\oplus I_2\\
        0\oplus I_2 & 2 \oplus \mathbf{0}_{2,2}
    \end{pmatrix}. \label{eq:ZquadricsRank5}
    \end{equation}
    Let \(q_0, q_{\infty}\) be the two quadratic forms defined by these matrices, i.e., \(q_i = \textbf{x}A_i\textbf{x}^T\); we will show that \(Z \colonequals V(q_0,q_{\infty})\subset \PP^5\) has the desired properties.

    Using the Schur complement, noting that \((-2ab)\oplus \mathbf{0}_{2,2} + T_2(0 \oplus I_{2}) + (0 \oplus I_{2})T_2^T = 2M_2\) by our normalization \(Q_2(1,0,0) = -ab\), we have 
\(\det(TA_{\infty} - A_0) = (ab)^{-2}\det(M_3 + 2TM_2 + T^2M_1)\). Hence \(Z\) is smooth and three-dimensional, as desired. This proves~\eqref{part:hyperelliptic}.  Since \(q_{\infty}\) is identically zero when restricted to \(V(x_3,x_4,x_5)\), the linear section of \(Z\) given by \( V(x_3,x_4,x_5)\) is equal to \(V(q_0,x_3,x_4,x_5)\) and so is given by the rank \(3\) quadratic form associated to the top left \(3\times3\) submatrix of \(A_0\). This proves~\eqref{part:smoothconic}.

To prove~\eqref{part:genericfiber}, we must further describe the conic bundle structure on \(\Bl_C Z\). We again let \(\Lambda \colonequals V(x_3, x_4, x_5)\), and let \(\phi\colon Z \dasharrow \PP^2\) be the projection away from \(C\colonequals V(q_0) \cap \Lambda\).
Fix a point  \(p= [u: v: w] \in \PP^2\).  As above, the fiber of \(\phi\) above \(p\) is obtained by taking the Zariski closure of \(\left(\Span(\Lambda, [0:0:0:u:v:w])\cap Z\right)\smallsetminus\Lambda\subset \PP^5\). Note that the restriction of \(q_{\infty}\) to \(\Span(\Lambda, [0:0:0:u:v:w])\) is a rank \(2\) quadratic form, and one can compute that vanishing of this quadratic form is the union of \(\Lambda\) and the \(3\)-dimensional column span of the following matrix
     \[
    B_{p} = \left(\begin{array}{cc|cc}
        1 & 0 & 0 & 0\\
        0 & -w & -u^2 & 0\\
        0 & v & 0 & -u^2\\
        \hline    
        \multicolumn{2}{c|}{\textbf{0}_{3,2}} & \multicolumn{2}{c}{[u, v, w]^T[v,w]     }   
    \end{array}\right);
    \]  
    we denote the column span of this matrix \(\Lambda_p\).
    Thus the fiber of \(\phi\) above \(p\) is \(\Lambda_p\cap V(q_0)\), which is given by the rank \(3\) quadratic form associated to the \(4\times4\) symmetric matrix \(B_{p}^T A_{0} B_{p}\). Let \(\mathbf{M}\) denote the negative of the middle \(2\times 2\) submatrix of \(B_{p}^T A_{0} B_{p}\); \(\mathbf{M}\) has rank \(2\). One can check by inspection of \(B_{p}\) and \(A_0\) that \(B_{p}^T A_{0} B_{p}\) is block diagonal with the top left entry \(1\) and that the bottom right \(3\times 3\) block has rank \(2\).  Thus, the Brauer class of the generic fiber is given by the class of the rank \(2\) quadratic form associated to \(\mathbf{M}\).  One can also check that the top left entry of \(\mathbf{M}\) is \(Q_1\) and that the discriminant of the associated quadratic form, which is \(-\det(\mathbf{M})\), is equal to \(-(ab)^{-1}v^2(Q_2^2 - Q_1Q_3)\).  Therefore,
    \begin{align*}
     [(\Bl_C Z)_{\eta}] = (-(ab)^{-1}v^2(Q_2^2 - Q_1Q_3), Q_1) & = (-ab(Q_2^2 - Q_1Q_3), Q_1)\\& = (Q_2^2 - Q_1Q_3, Q_1) + (-ab, Q_1).
    \end{align*}
    Furthermore, the Brauer relations show \((-ab, av^2 + bw^2) = (-ab, a) = (b,a)\), as desired.
\end{proof}

\section{A constant Brauer class from \(k\)-points of \(\Ptildem{1}\)}\label{sec:alpha} 

Let \(\pi\colon X \to \PP^2\) be a geometrically standard conic bundle whose discriminant curve is a smooth quartic, as in Section~\ref{sec:IJTBackground}.
In this section, we pull back \(\pi\colon X \to \PP^2\) to obtain a conic bundle over an open subset of \(\Ptildem{1}\) and then show that if \(\Ptildem{1}(k)\neq\emptyset\) then this conic bundle is constant, i.e., its Brauer class is in the image of the map \(\Br k\to \Br \kk(\Ptildem{1})\). This construction can be viewed as a globalization of the argument in \cite{FJSVV}*{Section 6.4}.

By Equation~\eqref{def:Ptilde1}, a point \(x\in\Ptildem{1}\) corresponds to an effective divisor \(D_x\) on \(\Deltatilde_{\kk(x)}\), whose image under the pushforward by \(\varpi\colon\Deltatilde \to \Delta\) is contained in a line in \(\PP^2\) (defined over \(\kk(x)\)).  We write \(\ell(x) \colonequals \Span(\varpi_*{D_x})\) 
for this line.  Let \(I\) be the universal line over \(\Ptildem{1}\):
\[I \colonequals \{(p, x) \in \PP^2 \times \Ptildem{1}  : p \in \ell(x)\}.\]
This comes equipped with  projections \(p_1 \colon I \to \PP^2\) and \(p_2 \colon I \to \Ptildem{1}\). By definition, the fiber \(p_2^{-1}(x)\) is the line \(\ell(x)\), so \(p_2\) has the following structure.
\begin{lemma}
\label{lem:two-torsion-ptilde-i-x}
The second projection \(p_2 \colon I \to \Ptildem{1}\) is a \(\PP^1\)-bundle, and any line \(\ell \subset \PP^2\) gives a rational section \(I_\ell \colonequals p_1^{-1}(\ell) \subset I\).  Further, \(p_2^*\colon \Br \Ptildem{1}[2] \to \Br I[2]\) is an isomorphism. 
\end{lemma}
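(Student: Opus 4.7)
The plan is to construct a morphism $\phi\colon \Ptildem{1}\to(\PP^2)^\vee$ whose fiber over $x$ is the line $\ell_x$; then $I$ will be the pullback of the universal line over $(\PP^2)^\vee$, and all three assertions follow formally.

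First I would construct $\phi$. The condition $\hh^0(D_x)=1$ ensures the existence of a universal effective divisor $\mathcal{D}\subset\Deltatilde\times\Ptildem{1}$. Pushing forward along $\varpi\times\id$ (which is finite flat) yields a relative effective Cartier divisor on $\Delta\times\Ptildem{1}$ whose fibers lie in the class $\OO_\Delta(1)$ (because each $\varpi_*D_x$ is cut out by a line). By the universal property of the complete linear system, this relative divisor corresponds to a morphism $\phi\colon\Ptildem{1}\to|\OO_\Delta(1)|$. The restriction map $H^0(\PP^2,\OO(1))\xrightarrow{\sim}H^0(\Delta,\OO_\Delta(1))$ (using the vanishing of $H^i(\PP^2,\OO(-3))$ for $i=0,1$) gives a canonical isomorphism $|\OO_\Delta(1)|\cong(\PP^2)^\vee$ sending $\Delta\cap\ell$ to $\ell$. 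Under this identification $\phi(x)=\ell_x$, and $I$ is by construction the pullback along $\phi$ of the universal incidence $\mathcal{U}\subset\PP^2\times(\PP^2)^\vee$.

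Next, since $\mathcal{U}\to(\PP^2)^\vee$ is the projectivization of the tautological rank-$2$ subbundle of $\OO^{\oplus 3}_{(\PP^2)^\vee}$, it is a Zariski-locally trivial $\PP^1$-bundle, and therefore so is $p_2\colon I\to\Ptildem{1}$, proving the first claim. For the second, I would note that for $x$ outside a proper closed subset of $\Ptildem{1}$ we have $\ell_x\neq\ell$, so $\ell_x\cap\ell$ is a single point and $p_2|_{I_\ell}\colon I_\ell\to\Ptildem{1}$ is birational, i.e., a rational section. For the third, since $p_2$ is the projectivization of a rank-$2$ vector bundle, the classical computation of Brauer groups of projective bundles (see, e.g., \cite{CTS-Brauer-book}) yields $p_2^*\colon \Br\Ptildem{1}\xrightarrow{\sim}\Br I$; restricting to $2$-torsion gives the claim.

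The main obstacle is rigorously setting up $\phi$ as a morphism rather than merely a rational map or set-theoretic correspondence; the key input is that the universal divisor on $\Deltatilde\times\Ptildem{1}$ exists and has well-defined pushforward to $\Delta\times\Ptildem{1}$. Once $\phi$ is in hand, the $\PP^1$-bundle structure, the rational section arising from a line, and the Brauer group isomorphism are all transported from the corresponding statements for the universal line over $(\PP^2)^\vee$.
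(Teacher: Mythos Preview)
Your proposal is correct and takes a genuinely different, more structural route than the paper. The paper argues directly: for a fixed line \(\ell\), the assignment \(D\mapsto(\ell_D\cap\ell,D)\) gives a section of \(p_2\) over the open set \(\Ptildem{1}\smallsetminus\varpi_*^{-1}(\ell\cap\Delta)\); this simultaneously exhibits \(I_\ell\) as a rational section and shows the generic fiber of \(p_2\) is \(\PP^1_{\kk(\Ptildem{1})}\). The Brauer statement is then deduced from a general result in \cite{CTS-Brauer-book} on smooth morphisms with generic fiber \(\PP^1\), which is why the paper restricts to \(2\)-primary torsion (to avoid characteristic issues in that citation). Your approach instead builds the morphism \(\phi\colon\Ptildem{1}\to(\PP^2)^\vee\) and realizes \(I\) as the pullback of the incidence correspondence, hence as the projectivization of a rank-\(2\) bundle; this gives the Zariski \(\PP^1\)-bundle structure and the full Brauer isomorphism \(\Br\Ptildem{1}\xrightarrow{\sim}\Br I\) in one stroke. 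The cost is the care needed to construct \(\phi\) as an honest morphism---your observation that \(h^0(D_x)=1\) lets you identify \(\Ptildem{1}\) with a locus in \(\bSym^4\Deltatilde\) carrying a tautological effective divisor is exactly the right way to do this. For the rational section, both arguments are essentially identical: intersect \(\ell_x\) with the fixed line \(\ell\).
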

\begin{proof}
The first statement is immediate from the definition of \(I\). The definition also implies that \(p_2\) is smooth. Thus, since we assumed \(2\) is invertible in \(k\), we have an isomorphism on the \(2\)-primary torsion of \(\Br_{\mathrm{vert}}(I/\Ptildem{1})\) \cite{CTS-Brauer-book}*{Definition 11.1.1} and the pullback of \(\Br(\Ptildem{1})\) by \cite{CTS-Brauer-book}*{Proof of Corollary 11.1.6(ii)} (noting that \cite{CTS-Brauer-book}*{Equation (11.3)} holds for \(p\)-primary torsion when \(p\in k^{\times}\)). Finally, by the discussion following \cite{CTS-Brauer-book}*{Definition 11.3.1},  \(\Br_{\mathrm{vert}}(I/\Ptildem{1}) = \Br(I)\).
\end{proof}

Let \(x \in \Ptildem{1}(k)\)
and let \(U\colonequals \PP^2\smallsetminus \Delta\).
We have the following commutative diagrams of varieties and Brauer groups:
\begin{equation}\label{eq:diagram-alphatilde1}
    \begin{tikzcd}
    & I \arrow[ddr, "p_2"] \arrow[ddl, "p_1", swap]& \\
    & I_{\ell(x)} \arrow[u, hook] \arrow[d]  \arrow[dr, "\mathrm{bir}", swap, {anchor=south, rotate=-40}, leftrightarrow, dashed]& \\
    \PP^2 & \ell(x) \arrow[l, hook'] & \Ptildem{1}&
    \end{tikzcd}
    \begin{tikzcd}
    & \Br I_{U}[2] \arrow[d] & \Br I[2]\arrow[l, hook']\\
    & \Br \kk(I_{\ell(x)})[2] & \\
    \Br U[2] \arrow[uur, "p_1^*"] \arrow[r] & \Br \kk(\ell(x))[2] \arrow[u] & 
     \Br \Ptildem{1}[2] \arrow[uu,  "p_2^*" {swap}, "\sim" {anchor=south, rotate=90}]   \arrow[ul, hook']
    \end{tikzcd}
\end{equation}

\begin{thm}[Strengthening of \cite{FJSVV}*{Proposition~6.5}]\label{thm:alphatilde1}
The Brauer class \(p_1^*[X_U]\) is unramified on \(I\) and therefore is an element of \(\Br I[2]\). If there exists a point \(x\in\Ptildem{1}(k)\), then \(p_1^*[X_U]\) is constant and is equal to \([X_w]\) for any point \(w\in (\ell(x)\smallsetminus (\Delta\cap \ell(x)))(k)\).  Moreover \([X_w] = [X_{\kk(\ell(x))}]\in \Br \kk(\ell(x))\).
\end{thm}

\begin{defn}\label{def:alphatilde1}
    Assume that \(\Ptildem{1}(k)\neq \emptyset\). Let \(x\in \Ptildem{1}(k)\) and define \[\alphatildem{1} \colonequals x^*(p_2^*)^{-1}\left(p_1^*[X_{U}]\right)\in \Br k[2].\]  In particular, \(\alphatildem{1} = [X_w]\) for any \(w\in (\ell(x)\smallsetminus (\Delta\cap \ell(x)))(k)\).
\end{defn}

\begin{remark}
Theorem~\ref{thm:alphatilde1} implies that \(\alphatildem{1}\) is independent of the choice of \(x\in \Ptildem{1}(k)\). However, the definition of \(\alphatildem{1}\in \Br k\) \emph{does} require that \(\Ptildem{1}(k)\neq \emptyset\), which motivates our choice of notation.
\end{remark}

\begin{proof}[Proof of Theorem~\ref{thm:alphatilde1}]
    First we will assume that \(p_1^*[X_{U}]\) is unramified on \(I\), and, from this, deduce the rest of the Theorem. Then at the end, we will prove that \(p_1^*[X_U]\) is unramified.

    Since \(p_1^*[X_U]\) is unramified on \(I\), its restriction to the fiber \(p_2^{-1}(x)\) over \(x \in \Ptildem{1}(k)\) is unramified.  Since \(p_2^{-1}(x)\) maps isomorphically onto \(\ell(x)\) under \(p_1\), the conic bundle \(\pi|_{\ell(x)} \colon X_{\ell(x)} \to \ell(x)\) is unramified and so, since \(\ell(x) \simeq \PP^1\), \([X_{\kk(\ell(x))}] = [X_w]\).
    So the image of \([X_{\kk (\ell(x))}]\) under \(\Br\kk(\ell(x))[2]\to\Br\kk(I_{\ell(x)})[2]\) is the  \(2\)-torsion constant algebra \([X_w]\). This is equal to the image of \(p_1^*[X_U]\) in \(\Br\kk(I_{\ell(x)})\) by commutativity of the left triangle of the right-hand diagram in~\eqref{eq:diagram-alphatilde1}.
    Furthermore, the rightmost commutative square
    in~\eqref{eq:diagram-alphatilde1} shows that \(\Br I[2] \to \Br \kk(I_{\ell(x)})[2]\) is injective. Thus we deduce that \([X_w] = p_1^*[X_{U}]\).

    Now we prove the claim that \(p_1^*[X_U] \in \Br I_U\) is unramified, i.e., extends to an element of \(\Br I\).  By the purity exact sequence \cite{CTS-Brauer-book}*{Theorem 3.7.3} and functoriality, it remains to prove that \(p_1^*[X_{U}]\) is unramified at every codimension \(1\) point of \(I\) lying over \(\Delta\), i.e., every \(2\)-dimensional component of \(I_{\Delta}\), since $\dim I = 3$.\footnote{In fact, one can show that \(I_{\Delta}\) is pure of dimension \(2\), so the \(2\)-dimensional restriction is superfluous. However, we do not need the fact that \(I_{\Delta}\) is pure in the proof, so we omit this step.}
    By definition, a point of \(I_\Delta\) is a tuple \((p, x) \in \Delta \times \Ptildem{1}\) such that \(p\in \ell(x) = \Span(\varpi_*{D_x})\).  Since \(p\in \Delta\), this implies that \(p\in \varpi_*{D_x}\). If \(\varpi_*D_x\) has multiplicity \(1\) at \(p\), there is a unique point \(q_{p, D_x}\in D_x\) of \(\Deltatilde\) lying over \(p \in \Delta\).  Away from the \(1\)-dimensional locus of \(I_{\Delta}\) consisting of points \((p, x)\) where \(\varpi_*D_x\) has multiplicity at least \(2\) at \(p\),
     the map \((p, x) \mapsto q_{p, D_x}\) defines a rational map \(I_{\Delta}\dashrightarrow \Deltatilde\) that factors the projection \(I_{\Delta}\to \Delta\). In particular, every \(2\)-dimensional component of \(I_{\Delta}\) must have a nonconstant rational map to \(\Deltatilde\). Since the residue of \([X_{U}]\in \Br U\) along \(\Delta\) is given by \(\Deltatilde\), this implies that the residue of \(p_1^*[X_U]\) along each 2-dimensional component of \(I_\Delta\) is trivial, so the pullback \(p_1^*[X_U]\) lands in \(\Br I[2]\).
\end{proof}

\section{Double covers of \(\PP^1\times \PP^2\) and obstructions to sections of \(\piQuadricSurface\)}\label{sec:obstruction-to-sections}

Any smooth double cover of
\(\PP^1_{[t_0:t_1]} \times \PP^2_{[u:v:w]}\) branched over a \((2,2)\)-divisor has equation
\begin{equation}\label{eq:DoubleCover}
z^2 = t_0^2 Q_1 + 2t_0t_1 Q_2 + t_1^2 Q_3,
\end{equation}
for a tuple of quadratic forms \(Q_1, Q_2, Q_3 \in k[u, v, w]\).  Further the quadratic forms are well-defined up to an action of \(\Aut(\PP^1)\), and this action agrees with the action of \(\PGL_2\) on the defining equations of \(\Deltatilde\) and \(\Delta\) described in Section~\ref{sec:etale_double_covers}, so the isomorphism class of the double cover depends only on \(\Deltatilde/\Delta\).  We write \(Y_{\Deltatilde/\Delta}\to \PP^1_{[t_0:t_1]} \times \PP^2_{[u:v:w]}\) for this isomorphism class.  If \(\Deltatilde\) and \(\Delta\) are smooth, then so is \(Y_{\Deltatilde/\Delta}\)~\cite{FJSVV}*{Theorem 6.1(a)}.

The second projection \(\pi\colon Y_{\Deltatilde/\Delta} \to \PP^2\) gives \(Y=Y_{\Deltatilde/\Delta}\) the structure of a standard conic bundle with discriminant cover \(\Deltatilde/\Delta\), and the first projection
\(\piQuadricSurface \colon Y\to \PP^1\) gives \(Y\) the structure of a quadric surface bundle.
By \cite{FJSVV}*{Theorem 4.5(ii) and Proposition 6.3(iii)}, the Prym curve \(\Gamma = \Gamma_{\Deltatilde/\Delta}\) arises from the Stein factorization of \(\calF_1(Y/\PP^1) \to \PP^1\), so \([\calF_1(Y/\PP^1)]\) gives a Brauer class on \(\Gamma\).  Furthermore, \(\piQuadricSurface\) has a section if and only if \([\calF_1(Y/\PP^1)] = 0\in \Br \Gamma\).\footnote{Any quadric surface has a point if and only if it has lines defined over the discriminant extension.  Indeed, if the quadric surface has a point, then slicing the surface with the tangent plane at that point will give a geometrically reducible conic whose geometric components are defined over the discriminant extension.  Conversely, if there are lines defined over the discriminant extension, then the intersection of a line and its Galois conjugate will give a rational point.\label{fn:QuadricSurfaceSection}}

\begin{thm}
\label{thm:brauer-class-fano-variety-of-lines}
Let \(k\) be a field of characteristic different from \(2\), let \(\varpi\colon \Deltatilde\to \Delta\) be a geometrically integral \'etale double cover of a smooth plane quartic over \(k\), and let \(Y= Y_{\Deltatilde/\Delta}\to \PP^1\times \PP^2\) be the double cover associated to \(\Deltatilde/\Delta\). Then for any \(x\in \Ptildem{1}(k)\) and any \(w\in (\ell(x)\smallsetminus (\ell(x)\cap \Delta))(k)\) , 
 \[
    [\calF_1(Y/\PP^1)] = [Y_w]_{\Gamma} = (\alphatildem{1})_{\Gamma} \in \Br \Gamma.
 \]
\end{thm}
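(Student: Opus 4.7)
The second equality $[Y_w]_\Gamma = (\alphatildem{1})_\Gamma$ is immediate from Definition~\ref{def:alphatilde1}: by hypothesis $w \in (\ell_x \smallsetminus (\ell_x \cap \Delta))(k)$ with $x \in \Ptildem{1}(k)$, so $[Y_w] = \alphatildem{1}$ in $\Br k$, and this persists after pullback to $\Gamma$. The substantive content is therefore the first equality, and my plan is to pass to the generic point of $\Gamma$ via purity: since $\Gamma$ is a smooth projective curve, $\Br \Gamma \hookrightarrow \Br \kk(\Gamma)$ is injective, so it suffices to prove $[\calF_1(Y/\PP^1)] = [Y_w]_\Gamma$ in $\Br \kk(\Gamma)$.

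At the generic point, $[\calF_1(Y/\PP^1)]$ is represented by the generic fiber $R$ of $\calF_1(Y/\PP^1) \to \Gamma$, a $\kk(\Gamma)$-conic realizing a ruling of the quadric surface $(Y_\eta)_{\kk(\Gamma)}$, while $[Y_w]_\Gamma$ is represented by the conic $(Y_w)_{\kk(\Gamma)}$. I would then exhibit a common splitting field. Set $L \colonequals \kk(\Gamma)(\sqrt{Q(t, w)})$ where $Q(t, w) = t_0^2 Q_1(w) + 2 t_0 t_1 Q_2(w) + t_1^2 Q_3(w)$. The conic $(Y_w)_{\kk(\Gamma)}$ has the explicit $L$-point $[t_0 : t_1 : \sqrt{Q(t, w)}]$, so it is split by $L$. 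For $R$, the fiber product $Y_w \times_{\PP^1} \Gamma \to \Gamma$ embeds into $\calF_1(Y/\PP^1)$ over $\Gamma$ as a relative degree-$2$ divisor, via the map sending $(t, y, z)$ (with $z^2 = Q(t, w)$ and $y^2 = \Delta(t)$) to the unique line through $(w, z)$ in the $y$-ruling of $Y_t$; since $\kk(Y_w \times_{\PP^1} \Gamma) = L$, the conic $R$ is also split by $L$. Hence both Brauer classes lie in $\ker(\Br \kk(\Gamma) \to \Br L)$, which for $2$-torsion classes is generated by cup products with the Kummer class of $Q(t, w)$.

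The main technical obstacle, and the heart of the proof, is then to pin down the residual scalars: both $[R]$ and $[Y_w]_{\kk(\Gamma)}$ are of the form $(\,\cdot\, , Q(t, w))$, and one must verify that the two first entries agree modulo norms from $L/\kk(\Gamma)$. I would approach this by explicit diagonalization: the rank-$4$ form cutting out $(Y_\eta)_{\kk(\Gamma)}$ has matrix $\operatorname{diag}(-M(t), 1)$ where $M(t) = t_0^2 M_1 + 2 t_0 t_1 M_2 + t_1^2 M_3$, and over $\kk(\Gamma)$, where $\det M(t) = \Delta(t) = y^2$ becomes a square, one can extract the Brauer class of a ruling via the even Clifford algebra. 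Comparing with the explicit formula $[Y_w] = ((Q_2^2 - Q_1 Q_3)(w), Q_1(w))$ from Section~\ref{sec:discriminant_covers}, the hypothesis $x \in \Ptildem{1}(k)$ enters essentially through Theorem~\ref{thm:alphatilde1}: the constancy of $[Y_\eta]|_{\ell_x}$ as $w$ varies on $\ell_x$ is precisely what aligns the two symbols in $\Br \kk(\Gamma)$.
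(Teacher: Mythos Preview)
Your approach diverges from the paper's at the crucial step. You correctly reduce to $\Br \kk(\Gamma)$ and correctly exhibit a common quadratic splitting field $L=\kk(\Gamma)(\sqrt{Q(t,w)})$ for both classes. But as you yourself flag, this only places both classes in the set $\{(a,Q(t,w)):a\in\kk(\Gamma)^\times\}\subset\Br\kk(\Gamma)[2]$, and the comparison of the two ``residual scalars'' is left as an outline. The proposed method---computing the even Clifford algebra of the rank-$4$ form with matrix $\operatorname{diag}(-M(t),1)$ over $\kk(\Gamma)$ and matching it against the symbol for $[Y_w]$---is in principle feasible but is a nontrivial calculation you have not carried out, and your final sentence (``the constancy of $[Y_\eta]|_{\ell_x}$ \ldots\ is precisely what aligns the two symbols'') is an assertion, not an argument: constancy of $[Y_w]$ along $\ell_x$ lives in $\Br k$ and does not by itself produce the required norm relation in $\kk(\Gamma)$. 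So there is a genuine gap.

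The paper sidesteps this computation entirely. Rather than exhibiting one common splitting field and then wrestling with symbols, it shows that the two classes have \emph{exactly the same} splitting fields over $\kk(\Gamma)$ and invokes Amitsur's theorem \cite{Amitsur}*{Corollary~9.5}: central simple algebras with identical splitting-field sets generate the same cyclic subgroup of the Brauer group, hence (both being $2$-torsion) coincide. One direction is close to your idea: if $k'/\kk(\Gamma)$ splits $[\calF_1(Y/\PP^1)]$ then the generic quadric surface becomes $\PP^1_{k'}\times\PP^1_{k'}$, so its hyperplane section $Y_{\ell_x}$ acquires a $k'$-point, and Theorem~\ref{thm:alphatilde1} gives $[Y_w]_{k'}=[Y_{\ell_x}]_{k'}=0$. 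The reverse direction uses an ingredient you do not invoke: if $k'$ splits $Y_w$, then $Y_w\subset Y_{\ell_x}$ gives $Y_{\ell_x}(k')\neq\emptyset$, whence \cite{FJSVV}*{Proposition~6.5} produces a section of $\piQuadricSurface$ over an odd-degree extension of $k'$, and Springer's theorem \cite{Springer52} descends it to $k'$, killing $[\calF_1(Y/\PP^1)]$. This two-sided splitting-field argument replaces your unfinished symbol comparison.
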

We prove this in Section~\ref{sec:ProofOfBrClassThm}. Before doing so, we record the following corollaries.

\begin{cor}\label{cor:brauer-class-fano-variety-of-lines}
Let \(\Deltatilde\),\(\Delta\) and \(Y= Y_{\Deltatilde/\Delta}\) be as in Theorem~\ref{thm:brauer-class-fano-variety-of-lines}. Consider the following conditions:
\begin{enumerate}
    \item \(\Ptildem{1}(k)\neq\emptyset\) and \(\tilde{\alpha}^{(1)}\in \ker (\Br k\to \Br \Gamma)\) \textup{(}i.e., \((\tilde{\alpha}^{(1)})_{\Gamma} = 0\in \Br \Gamma\)\textup{)}.
    \label{it:IJTandBrVanish}
    \item \(\piQuadricSurface\colon Y \to \PP^1\) has a section (over \(k\)).\label{it:pi1section}
    \item \(Y\) is rational (over \(k\)).\label{it:ratl}
\end{enumerate}
Then \eqref{it:IJTandBrVanish}\(\Rightarrow\)\eqref{it:pi1section}\(\Rightarrow\)\eqref{it:ratl}.
\end{cor}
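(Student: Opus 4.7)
\medskip

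\noindent\textbf{Proof proposal.} The plan is to deduce each implication directly from the preceding results, so both steps should be short. The key inputs are Theorem~\ref{thm:brauer-class-fano-variety-of-lines}, the criterion recorded just before that theorem (and footnote~\ref{fn:QuadricSurfaceSection}) that $\piQuadricSurface$ admits a section if and only if the class $[\calF_1(Y/\PP^1)]$ vanishes in $\Br \Gamma$, and the classical fact that a quadric surface with a rational point is rational over its base field.

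For \eqref{it:IJTandBrVanish}$\Rightarrow$\eqref{it:pi1section}, I would apply Theorem~\ref{thm:brauer-class-fano-variety-of-lines}: since $\Ptildem{1}(k)\neq\emptyset$, the class $\alphatildem{1}\in\Br k$ is defined, and the theorem gives the equality $[\calF_1(Y/\PP^1)] = (\alphatildem{1})_{\Gamma}$ in $\Br\Gamma$. By hypothesis the right-hand side is trivial, so $[\calF_1(Y/\PP^1)]=0$, which by the criterion recalled above is equivalent to $\piQuadricSurface$ admitting a $k$-rational section. (One could in principle skip the intermediate step and observe directly that $\alphatildem{1}$ is represented by the conic $X_w$, which is a plane section of the quadric surface fiber $Y_w$, so a splitting field for $(\alphatildem{1})_\Gamma$ already splits $Y_w$; however, routing through Theorem~\ref{thm:brauer-class-fano-variety-of-lines} is cleanest.)

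For \eqref{it:pi1section}$\Rightarrow$\eqref{it:ratl}, I would observe that a section of the quadric surface fibration $\piQuadricSurface\colon Y\to\PP^1$ produces a $\kk(\PP^1)$-point on the generic fiber $Y_{\kk(\PP^1)}$, which is a smooth quadric surface over $\kk(\PP^1)$. Stereographic projection from this point gives a birational map from $Y_{\kk(\PP^1)}$ to $\PP^2_{\kk(\PP^1)}$, so $Y$ is birational to $\PP^1\times\PP^2$ over $k$, hence $k$-rational.

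I do not anticipate any serious obstacle: both implications are essentially formal given the machinery already developed. The only point that requires a little care is making sure the generic fiber $Y_{\kk(\PP^1)}$ is geometrically a smooth quadric surface (so that stereographic projection indeed yields a birational equivalence with $\PP^2_{\kk(\PP^1)}$), which is immediate from the description of $\piQuadricSurface$ as a quadric surface bundle recorded at the start of Section~\ref{sec:obstruction-to-sections} together with the smoothness of $Y$.
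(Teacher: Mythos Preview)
Your proposal is correct and follows essentially the same route as the paper: for \eqref{it:IJTandBrVanish}$\Rightarrow$\eqref{it:pi1section} the paper invokes Theorem~\ref{thm:brauer-class-fano-variety-of-lines} to identify $(\alphatildem{1})_\Gamma$ with $[\calF_1(Y/\PP^1)]$ and then appeals to Footnote~\ref{fn:QuadricSurfaceSection}, and for \eqref{it:pi1section}$\Rightarrow$\eqref{it:ratl} it simply notes that the generic fiber of $\piQuadricSurface$ is a quadric, hence rational once it has a point. Your write-up is slightly more verbose (spelling out the stereographic projection and the smoothness of the generic fiber), but the argument is the same.
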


\begin{proof}
    \eqref{it:IJTandBrVanish}\(\Rightarrow\)\eqref{it:pi1section}:
    By Theorem~\ref{thm:brauer-class-fano-variety-of-lines}, \((\alphatildem{1})_{\Gamma}=[\calF_1(Y/\PP^1)]\in \Br \Gamma\),
    and hence \([\calF_1(Y/\PP^1)] = 0\in \Br \Gamma\).
    Therefore \(\piQuadricSurface\) has a section (see  Footnote~\ref{fn:QuadricSurfaceSection}).

\eqref{it:pi1section}\(\Rightarrow\)\eqref{it:ratl}: The generic fiber of \(\piQuadricSurface\) is a quadric, which is rational as soon as it has a point.
\end{proof}

\begin{remark}
By Proposition~\ref{prop:IJTforConicBundles}, the vanishing of the IJT obstruction for \(Y\) is equivalent to either \(\Ptildem{1}(k)\neq\emptyset\) or \(\Ptilde(k)\neq\emptyset\). Thus, the condition~\eqref{it:IJTandBrVanish} in Corollary~\ref{cor:brauer-class-fano-variety-of-lines} is stronger than assuming only that the IJT obstruction vanishes.
\end{remark}

\begin{cor}\label{cor:localfield_index2}
    Let \(k\) be a local field of characteristic different from \(2\), let \(\Deltatilde,\Delta\) be as above, and let \(Y= Y_{\Deltatilde/\Delta}\to \PP^1\times \PP^2\) be the double cover associated to \(\Deltatilde/\Delta\).  If \(\Gamma\) has index \(2\), i.e., if \(\Gamma(L) = \emptyset\) for all extensions \(L/k\) of odd degree, then 
    \(Y\) is rational if and only if the IJT obstruction vanishes.
\end{cor}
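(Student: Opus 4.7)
The plan is to invoke Corollary~\ref{cor:brauer-class-fano-variety-of-lines}. The forward implication of the claimed equivalence is standard since the IJT obstruction is a rationality obstruction over every field. For the converse, assume the IJT obstruction vanishes; by Proposition~\ref{prop:IJTforConicBundles}, at least one of $\Ptildem{1}(k)$ or $\Ptilde(k)$ is nonempty.

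The main technical input is the following Brauer-group computation: over a local field $k$ with $\Gamma$ of index $2$, the pullback $\Br k[2]\to \Br \Gamma$ is the zero map. By the Leray spectral sequence for $\Gamma\to \Spec k$ with coefficients $\G_m$, one identifies
\[
\ker\!\bigl(\Br k \to \Br\Gamma\bigr)=\im\!\bigl(d_2\colon (\Pic\bar\Gamma)^{G_k}\to\Br k\bigr).
\]
Since every closed point of $\Gamma$ has even degree, $\im d_2 \subseteq \Br k[2]$. For the reverse inclusion, I would identify the restriction of $d_2$ to $\Jac(k)\subset(\Pic\bar\Gamma)^{G_k}$ with (a sign of) the Tate local duality pairing against $[\bPic^1_\Gamma]\in H^1(k,\Jac)$. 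By Lichtenbaum's period-equals-index theorem over a local field, the index-$2$ hypothesis forces $\bPic^1_\Gamma(k)=\emptyset$, so $[\bPic^1_\Gamma]$ has order exactly $2$. Non-degeneracy of the Tate pairing then yields $\im d_2 = \Br k[2]$.

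When $\Ptildem{1}(k)\neq\emptyset$, Definition~\ref{def:alphatilde1} produces $\alphatildem{1}\in\Br k[2]$, and the Brauer-group computation above forces $(\alphatildem{1})_\Gamma = 0$. The implication (1)$\Rightarrow$(3) of Corollary~\ref{cor:brauer-class-fano-variety-of-lines} then shows that $Y$ is rational.

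The remaining case, where only $\Ptilde(k)\neq\emptyset$ (so $[\Ptildem{1}]$ equals the nonzero class $[\Pm{1}]$), is the main obstacle I anticipate. My plan would be to pass to the quadratic extension $k'=\kappa(P)$ for $P$ a closed point of $\Gamma$ of degree $2$, which exists by the index-$2$ hypothesis. Over $k'$ the curve $\Gamma_{k'}$ has a rational point, so $\bPic^1_{\Gamma_{k'}}(k')\neq\emptyset$; the ``moreover'' in Proposition~\ref{prop:IJTforConicBundles} then yields $\Ptildem{1}_{k'}(k')\neq\emptyset$, and the argument of the previous paragraph applied over $k'$ gives $[\calF_1(Y/\PP^1)]|_{\Gamma_{k'}} = 0 \in \Br\Gamma_{k'}$. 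The principal difficulty is then to descend this vanishing to $\Br\Gamma$: one would need to analyze $\ker(\Br\Gamma \to \Br\Gamma_{k'})$ via the Hochschild--Serre spectral sequence for the degree-two étale cover $\Gamma_{k'}\to\Gamma$, leveraging the $2$-torsion nature of $[\calF_1(Y/\PP^1)]$ and the constraint its image in $H^1(k,\Pic\bar\Gamma)$ must satisfy to come from the conic-bundle geometry.
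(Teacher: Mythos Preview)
The paper's proof sidesteps your ``remaining case'' entirely. Its first move is to assert that \(\bPic^1_\Gamma(k)\neq\emptyset\) for every local field \(k\) (citing Lichtenbaum for nonarchimedean \(k\), and Remark~\ref{rem:Real}\ref{it:Pic1} for \(k=\RR\)); the ``moreover'' clause of Proposition~\ref{prop:IJTforConicBundles} then makes the vanishing of the IJT obstruction \emph{equivalent} to \(\Ptildem{1}(k)\neq\emptyset\), so \(\alphatildem{1}\) is defined whenever the IJT obstruction vanishes. From there the paper proceeds exactly as you do in your main case, showing \(\Br k[2]\to\Br\Gamma\) is identically zero and invoking Corollary~\ref{cor:brauer-class-fano-variety-of-lines}. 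The paper's Brauer-kernel argument is also more direct than yours: a point of \(\bPic^1_\Gamma(k)\) not lifting to \(\Pic^1(\Gamma)=\emptyset\) already exhibits the nonzero element of \(\Br k[2]\simeq\Z/2\Z\) in \(\coker\bigl(\Pic(\Gamma)\to\bPic_\Gamma(k)\bigr)\hookrightarrow\ker(\Br k\to\Br\Gamma)\), with no appeal to Tate duality needed. Note that the paper's use of Lichtenbaum is opposite to yours: you invoke period-equals-index to get \(\bPic^1_\Gamma(k)=\emptyset\), while the paper cites the same reference for \(\bPic^1_\Gamma(k)\neq\emptyset\).

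Independent of how that tension is resolved, your quadratic-extension workaround for the remaining case has a gap. After base change to \(k'=\kappa(P)\) for \(P\) a degree-\(2\) point of \(\Gamma\), the curve \(\Gamma_{k'}\) acquires a \(k'\)-rational point, so its index drops to \(1\) and \(\Br k'\to\Br\Gamma_{k'}\) becomes \emph{injective} (evaluation at that point is a retraction). Hence ``the argument of the previous paragraph,'' which relied on \(\Br k[2]\to\Br\Gamma\) being zero, does not transfer to \(k'\): knowing only that \(\alphatildem{1}_{k'}\in\Br k'[2]\) no longer forces \([\calF_1(Y/\PP^1)]|_{\Gamma_{k'}}=0\). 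The descent discussion that follows is therefore moot.
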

\begin{proof}
We will first show that the vanishing of the IJT obstruction is equivalent to~\eqref{it:IJTandBrVanish} in Corollary~\ref{cor:brauer-class-fano-variety-of-lines}. First, by \cite{Lichtenbaum}*{Theorem~7} and \cite{poonen-stoll}*{Footnote~10}, since \(k\) is a local field we have \(\bPic^1_\Gamma(k)\neq\emptyset\) (see Remark~\ref{rem:Real}\ref{it:Pic1} for \(k=\RR\)). So by Proposition~\ref{prop:IJTforConicBundles}, the vanishing of the IJT obstruction is equivalent to \(\Ptildem{1}(k)\neq\emptyset\), and in this setting we may define \(\alphatildem{1}\in \Br k[2]\) (see Definition~\ref{def:alphatilde1}).
Furthermore, \(\Pic^1(\Gamma) = \emptyset\) since \(\Gamma\) has index \(2\), so the map \(\Pic(\Gamma) \to \bPic_{\Gamma}(k)\) has a nontrivial element of order \(2\) in its cokernel.  Since \(\Br k[2] \simeq \Z/2\Z\), the low degree terms of the Hochschild--Serre spectral sequence imply that \(\Br k[2] \to \Br \Gamma[2]\) is identically \(0\). In particular, the condition \(\alphatildem{1}\in \ker(\Br k\to \Br \Gamma)\) always holds. This shows that~\eqref{it:IJTandBrVanish} holds if and only if the IJT obstruction vanishes.

Corollary~\ref{cor:brauer-class-fano-variety-of-lines} shows \eqref{it:IJTandBrVanish}\(\Rightarrow\)\eqref{it:ratl}. Conversely, if~\eqref{it:ratl} holds, then the IJT obstruction vanishes by~\cite{bw-ij}*{Theorem 3.11}.
\end{proof}

\subsection{Proof of Theorem~\ref{thm:brauer-class-fano-variety-of-lines}}\label{sec:ProofOfBrClassThm}

    The second equality holds by Definition~\ref{def:alphatilde1}. For the first equality, it suffices to show equality in \(\Br \kk(\Gamma)\). By \cite{Amitsur}*{Corollary~9.5}, if two finite-dimensional central simple algebras over \(\kk(\Gamma)\) have the same splitting fields over \(\kk(\Gamma)\), then they generate the same subgroup in \(\Br \kk(\Gamma)\). Since  \([Y_w]\) and  \([\calF_1(Y/\PP^1)]\) are both \(2\)-torsion, showing they have the same splitting fields will imply the desired equality. 

Let \(k'\) be an extension of \(\kk(\Gamma)\) which is a splitting field for \([\calF_1(Y/\PP^1)]\). Then the generic fiber of \(Y\to \PP^1\) contains lines over \(k'\), so, over \(k'\), the generic fiber becomes isomorphic to \(\PP^1_{k'}\times \PP^1_{k'}\).  In particular, every hyperplane section of the generic fiber contains \(k'\)-points. Note that \((Y_{\ell(x)})_{k'}\) is a hyperplane section of the generic fiber, so then \(Y_{\ell(x)}(k')\neq \emptyset\). By Theorem~\ref{thm:alphatilde1}, \(Y_{\ell(x)}\) is Brauer equivalent to \(Y_w\), so \(Y_w(k')\neq\emptyset.\)

Now assume that \(k'\) is an extension of \(\kk(\Gamma)\) which is a splitting field for \(Y_w\).  Then since \(Y_w\subset Y_{\ell(x)}\), we have \(Y_{\ell(x)}(k')\neq\emptyset\), so by Theorem~\ref{thm:alphatilde1} \(Y_{\ell(x)}\to \ell(x)\) has a section defined over \(k'\). Furthermore, the point \(x\in \Ptildem{1}(k)\) by definition (see~\eqref{def:Ptilde1}) gives a maximal collection \(W\) of pairwise disjoint geometric \((-1)\)-curves of \(Y_{\ell(x)}\) that is defined over \(k'\). The complementary set of geometric \((-1)\)-curves \(W'\) is then also defined over \(k'\). By applying Lemma~\ref{lem:ConicBundle} below to the blow-down at \(W'\), we obtain a \(k'\) section \(\ell(x)\to Y_{\ell(x)}\) that meets every geometric component of \(W\) and is disjoint from \(W'\). By~\cite{FJSVV}*{Proposition 5.10(iii) and Theorem 6.4 (cf. Proof of Proposition 6.5)}, the image of this section is a curve that maps with odd degree to its image under \(\pi_1\colon Y \to \PP^1\).  By Springer's theorem~\cite{Springer52}, \(Y \to \PP^1\) then has a section over \(k'\) so \([\calF_1(Y/\PP^1)]\) is trivial in \(\Br k'\).\qed

\begin{lemma}\label{lem:ConicBundle}
    Let \(\pi \colon X \to \PP^1\) be a smooth morphism such that every fiber is a smooth conic and \(X(k) \neq \emptyset\).  For any finite Galois-invariant set \(S \subset X(\kbar)\) containing at most one point in each geometric fiber, there exists a section of \(X\) defined over \(k\) avoiding \(S\).
\end{lemma}
\begin{proof}
    The assumptions imply that \(X\) is a trivial conic bundle with a rational point, and hence is isomorphic to a Hirzebruch surface.  Let \(\sigma\in\Pic X\) denote the class of a section of \(\pi\), and let \(f\) denote the class of a fiber.  
    Let \(n \colonequals \#S\), which by assumption is also the number of geometric fibers on which \(S\) is supported.  Let \(F\) denote the union of geometric fibers containing \(S\); the divisor \(F\) has class \(nf\).  For \(b \gg 0\), by ampleness of \(\OO_{\PP^1}(1)\), we have
    \[H^1(\PP^1, \pi_* \OO_X(\sigma + (b-n)f)) = H^1(\PP^1, \pi_*\OO_X(\sigma) \otimes \OO_{\PP^1}(b-n)) = 0.\]
    Furthermore, since \(\sigma + (b-n)f\) restricts to \(\OO_{\PP^1}(1)\) on the fibers of \(\pi\), by the theorem on cohomology and base change, we have, \(H^0(\PP^1, R^1\pi_*\OO_X(\sigma + (b-n)f)) = 0\).
    Thus, by the Leray spectral sequence, we have \(H^1(X, \OO_X(\sigma + (b-n)f)) = 0\).  Thus, for \(b \gg 0\), the restriction map
    \[H^0(X, \OO_X(\sigma + bf)) \to H^0(F, \OO_X(\sigma + bf)|_F)\]
    is surjective.  On each geometric irreducible component of \(F\), the class \(\sigma + bf\) restricts to \(\OO_{\PP^1}(1)\), which is basepoint free.  Hence, there exists a section in \(H^0(F, \OO_X(\sigma + bf)|_F)\) avoiding \(S\), which by the above surjectivity can be lifted to a section on \(X\).
\end{proof}

\section{Proof of Theorem~\ref{thm:intro-mainthm-1}}\label{sec:proof}

Let \(Q_1, Q_2, Q_3 \in k[u, v, w]\) be homogeneous forms defining a smooth plane quartic \(\Delta\) and a geometrically integral \'etale double cover \(\Deltatilde\to \Delta\) as in \eqref{eq:Delta}--\eqref{eq:Deltatilde}.
By assumption, there exists \(y\in \Gamma_{\Deltatilde/\Delta}(k)\) which, without loss of generality, we assume  lies over \([1:0]\in \PP^1(k)\) so that \(Q_1\) either is rank \(2\) or has square discriminant. 
By Proposition~\ref{prop:construction-intersection-of-quadrics}, there exists a smooth complete intersection of two quadrics \(Z\subset \PP^5\) containing a conic \(C\) such that \(\Bl_C Z\to \PP^2\) has discriminant cover \(\Deltatilde/\Delta\).  Furthermore, the generic fiber of the conic bundle has Brauer class
\[
    [(\Bl_C Z)_{\eta}] = (Q_1, Q_2^2 - Q_1Q_3) + [Q_1] \in \Br \kk(\PP^2).
\]

Since \(Y\) is defined by the equation \(z^2 = Q_1(u,v,w)t_0^2 + 2Q_2(u,v,w)t_0t_1 + Q_3(u,v,w)t_1^2\), completing the square shows that the Brauer class of the generic fiber of the conic bundle \(\pi\colon Y \to \PP^2\) is equal to the quaternion algebra \((Q_1, Q_2^2 - Q_1Q_3)\).  In particular, 
\[
    [(\Bl_C Z)_{\eta}] - [Y_\eta] = [Q_1]\in \Br k[2].
\]

On the other hand, the fiber of \(\piQuadricSurface\colon Y \to \PP^1\) above \([1:0]\), described in \eqref{eq:DoubleCover}, is given by \(z^2 = Q_1\), so \([Q_1]\) is equal to the evaluation of \([\calF_1(Y/\PP^1)]\) at \(y\). By Theorem~\ref{thm:brauer-class-fano-variety-of-lines}, \(\calF_1(Y/\PP^1)\) is a constant algebra, so it is equal to its evaluation at \(y\).  Furthermore, the class of the relative Fano variety of lines \([\calF_1(Y/\PP^1)]\) is exactly the obstruction to \(\piQuadricSurface\) having a section (see Footnote~\ref{fn:QuadricSurfaceSection}). Finally, we deduce that \(\Bl_C Z\) is rational by Corollary~\ref{cor:intro-inline}.\qed

\section{Real topological consequences of Theorem~\ref{thm:intro-mainthm-1}}\label{sec:real}

The goal of this section is to show that real topological obstructions to rationality imply that for a fixed discriminant cover \(\Deltatilde/\Delta\) (with no restrictions on \(\deg \Delta\)), there is a unique conic bundle structure on \(\mathbb{P}^3_\RR\) with discriminant cover \(\Deltatilde/\Delta\), with the possible exception of one isotopy class of \(\Delta(\RR)\). Precisely, we prove the following.
\begin{thm}\label{thm:MultipleConicBundleStrOverR}
    Let \(\pi\colon X \to \PP^2_\RR\) and \(\pi'\colon X'\to \PP^2_\RR\) be conic bundles with the same discriminant cover \(\Deltatilde/\Delta\) of smooth geometrically integral curves. Assume that the real points of \(X\) and \(X'\) are
    both nonempty and connected. Then either \(X_{\eta}\simeq X'_{\eta}\), or \(\Delta(\mathbb{R})\) is an \defi{oval} (i.e., a homotopically trivial simple closed curve in \(\PP^2(\RR)\)) and \(\Deltatilde(\mathbb{R}) =\emptyset\).
\end{thm}
\begin{cor}
        Let \(\pi\colon X \to \PP^2_\RR\) and \(\pi'\colon X'\to \PP^2_\RR\) be conic bundles with the same discriminant cover \(\Deltatilde/\Delta\) of smooth geometrically integral curves. If \(X\) and \(X'\) are both rational, then either \(X_{\eta}\simeq X'_{\eta}\), or \(\Delta(\mathbb{R})\) is an oval
        and \(\Deltatilde(\mathbb{R}) =\emptyset\).
\end{cor}
\begin{proof}
    Since \(X, X'\) are both rational, the loci of real points \(X(\RR), X'(\RR)\) are nonempty and connected \cite{DelfsKnebusch}*{Theorem~13.3}, so the statement follows from Theorem~\ref{thm:MultipleConicBundleStrOverR}.
\end{proof}

In particular, in the setting of Theorem~\ref{thm:MultipleConicBundleStrOverR}, if \(X_\eta\not\simeq X'_\eta\), then the classification of the real loci of smooth plane curves implies that \(\deg\Delta\) is necessarily even (see Section~\ref{sec:RealArbitraryDegree}).
In the degree \(4\) conic bundle case, from Theorem~\ref{thm:MultipleConicBundleStrOverR} and our earlier results (including Theorem~\ref{thm:intro-mainthm-1}), we deduce rationality criteria 
for all but one isotopy class of \(\Delta(\RR)\).

\begin{thm}\label{thm:real}
Let \(\varpi\colon \Deltatilde\to \Delta\) be a geometrically integral \'etale double cover of a smooth plane quartic over \(\RR\), and let \(Y_{\Deltatilde/\Delta}\) be the double cover defined in~\eqref{eq:DoubleCover}. Consider the following statements.
\begin{enumerate}
    \item The IJT obstruction vanishes and \(\tilde{\alpha}^{(1)}\in \ker (\Br \RR\to \Br \Gamma)\).\label{it:IJTandBrVanish-R}
    \item \(\piQuadricSurface\colon Y \to \PP^1\) has a section.\label{it:pi1section-R}
    \item \(Y\) is rational.\label{it:ratl-R}
    \item The IJT obstruction vanishes and \(Y(\RR)\) is nonempty and connected.\label{it:IJTandConnected}
\end{enumerate}
Then \eqref{it:IJTandBrVanish-R}\(\Rightarrow\)\eqref{it:pi1section}\(\Rightarrow\)\eqref{it:ratl-R}\(\Rightarrow\)\eqref{it:IJTandConnected}. If  \(\Delta(\RR)\) is not a single oval or if \(\Gamma(\RR) = \emptyset\), then \eqref{it:IJTandConnected}\(\Rightarrow\)\eqref{it:IJTandBrVanish-R}.  
\end{thm}

\begin{proof}
\eqref{it:IJTandBrVanish-R}\(\Rightarrow\)\eqref{it:pi1section-R}\(\Rightarrow\)\eqref{it:ratl-R} by Corollary~\ref{cor:brauer-class-fano-variety-of-lines} and Remark~\ref{rem:Real}\ref{it:Pic1}. The implication \eqref{it:ratl-R}\(\Rightarrow\)\eqref{it:IJTandConnected} follows because 
 the number of connected components of \(Y(\RR)\) is a birational invariant for smooth projective varieties~ \cite{DelfsKnebusch}*{Theorem~13.3} and the IJT obstruction is an obstruction to rationality (see Section~\ref{sec:IJTBackground}). It remains to prove \eqref{it:IJTandConnected}\(\Rightarrow\)\eqref{it:IJTandBrVanish-R} under the given assumptions.  
If \(\Gamma(\RR) =\emptyset\), then \(\Br \RR \to \Br \Gamma\) is identically \(0\) and so the implication is trivial.
 Now assume that \(\Gamma(\RR)\neq\emptyset\) and that \(\Delta(\RR)\) is not a single oval. In this case, \(\Br \RR \to \Br \Gamma\) is injective (evaluating at a point of \(\Gamma(\RR)\) gives a retraction), so we may freely identify constant Brauer classes with their images in \(\Br \Gamma\) (or \(\Br \kk(\PP^2)\)). By Theorems~\ref{thm:intro-mainthm-1},~\ref{thm:alphatilde1} and~\ref{thm:brauer-class-fano-variety-of-lines}, there is a rational threefold \(X'\) and a standard conic bundle \(\psi\colon X' \to \PP^2\) such that \([Y_{\eta}] - [X'_{\eta}] = \tilde{\alpha}^{(1)}\). Since \(X'\) is rational, its real points are nonempy and connected~\cite{DelfsKnebusch}*{Theorem~13.3}. Then the assumption that \(\Delta(\RR)\) is not a single oval together with Theorem~\ref{thm:MultipleConicBundleStrOverR} completes the proof.
\end{proof}
\begin{remarks}\label{rem:Real}\hfill
\begin{enumerate}[(i)]
    \item If the IJT obstruction vanishes, then the class \(\tilde{\alpha}^{(1)}\) is defined. Indeed, \(\bPic^1_{\Gamma}\simeq \bPic^3_{\Gamma}\) has an \(\RR\)-point because there exists a \(\Gal(\C/\RR)\)-invariant partition of the Weierstrass points of \(\Gamma\) into two sets of cardinality \(3\). Precisely, either there are at least two real Weierstass points, and thus the Weierstrass points can be divided into Galois-invariant subsets of order \(3\), or the Weierstrass points all come in complex conjugate pairs and \(\{\{\omega_1,\omega_2,\omega_3\},\{\overline{\omega}_1,\overline{\omega}_2,\overline{\omega}_3\}\}\) is the desired Galois-invariant partition. Then Proposition~\ref{prop:IJTforConicBundles} shows that the vanishing of the IJT obstruction is equivalent to \(\Ptildem{1}(\RR)\neq\emptyset\).\label{it:Pic1}
    \item In the case that \(\Delta(\RR)\) is not two nested ovals (nor a single oval), Theorem~\ref{thm:real} recovers an earlier result of M. Ji and the second author, namely the equivalences \eqref{it:pi1section}\(\Leftrightarrow\)\eqref{it:ratl}\(\Leftrightarrow\)\eqref{it:IJTandConnected}~\cite{JJ23}*{Theorem 1.2}. 
    \item The condition in~\eqref{it:IJTandBrVanish-R} cannot be replaced with the stronger condition that the IJT obstruction vanishes and \(\tilde{\alpha}^{(1)} = 0\in \Br \RR\). Indeed, there are examples where \(Y\) is rational over \(\RR\) and \(\tilde{\alpha}^{(1)}\in \Br \RR\) is the nontrivial element; see \cite{JJ23}*{Example 4.10}. If \(\Delta(\RR)\) is not a single oval, such examples necessarily have that \(\Br \RR\to \Br \Gamma\) is identically zero, which is equivalent to \(\Gamma(\RR)=\emptyset\).\label{it:rem-Real-alphatilde1-nonzero}
    \item When \(\Delta(\mathbb R)\) is a single oval and \(\Gamma(\RR)\neq\emptyset\), then the implication \eqref{it:IJTandConnected}\(\Rightarrow\)\eqref{it:IJTandBrVanish-R} can fail. Indeed, the implication \eqref{it:IJTandConnected}\(\Rightarrow\)\eqref{it:pi1section-R} can fail; a counterexample to this implication is provided by \cite{FJSVV}*{Example 1.6}. It remains an open question whether~\eqref{it:ratl-R}\(\Rightarrow\)\eqref{it:pi1section-R} or~\eqref{it:IJTandConnected}\(\Rightarrow\)\eqref{it:ratl-R} in general.\label{it:one-oval-R-unknown-example}
    \item The isotopy class of \(\Delta(\RR)\) does not in general determine whether \(\alphatildem{1}\) is in the kernel of \(\Br\RR\to\Br\Gamma\) (even when combined with the data of the topological type of \(\Deltatilde(\RR)\)). Indeed, there are examples where the condition that \(\tilde{\alpha}^{(1)}\in \ker (\Br \RR\to \Br \Gamma)\) differs when \(\Deltatilde(\RR)=\emptyset\) and \(\Delta(\mathbb R)\) is either empty \cite{JJ23}*{Examples 4.1 and 4.8}, two non-nested ovals (\cite{FJSVV}*{Theorem 1.4(i)} and \cite{JJ23}*{Example 4.11}), or three ovals \cite{JJ23}*{Examples 4.6 and 4.12}.
    \end{enumerate}
\end{remarks}

In Section~\ref{sec:RealArbitraryDegree}, we study the real topological properties of \(\pi(X(\mathbb{R}))\) for \(\pi\colon X \to \PP^2\) a conic bundle with \(\deg\Delta\) arbitrary. We then use these results to prove Theorem~\ref{thm:MultipleConicBundleStrOverR}. In Section~\ref{sec:RealDegree4} we specialize to degree \(4\) conic bundles and further explore the conditions in Theorem~\ref{thm:real} from a topological perspective.

\subsection{Real points on conic bundles}\label{sec:RealArbitraryDegree}

First we recall some facts about the topology of smooth real plane curves; see \cite{Mangolte-real}*{Section 2.7} for more details.
If \(\Delta\) is a smooth plane curve over \(\RR\), then its real locus \(\Delta(\RR)\) is a disjoint union of simple closed curves in \(\PP^2(\RR)\). If \(\deg\Delta\) is even, then \(\Delta(\RR)\) is a (possibly empty) disjoint union of ovals (recall an oval is a homotopically trivial simple closed curve in \(\PP^2(\RR)\)). Furthermore, in the even degree case, the complement \(\PP^2(\RR)\setminus\Delta(\RR)\) has a unique non-orientable component, called the \defi{outside} of \(\Delta(\RR)\). If \(\deg\Delta\) is odd, then exactly one connected component of \(\Delta(\RR)\) is a \defi{pseudo-line} (i.e., a simple closed curve in \(\PP^2(\RR)\) that is not homotopically trivial) and the other connected components are ovals. The complement of a pseudo-line in \(\mathbb P^2(\RR)\) is connected and is homeomorphic to a disc.
\begin{prop}\label{prop:ImageOfRealPoints}
    Let \(\varpi\colon\Deltatilde\to\Delta\) be a geometrically integral \'etale double cover of a smooth plane curve over \(\RR\), and let \(\pi\colon X\to \PP^2_{\RR}\) be a conic bundle with discriminant cover \(\Deltatilde/\Delta\).
    \begin{enumerate}
        \item The boundary \(\partial(\pi(X(\RR)))\) is equal to \(\Delta(\RR)\smallsetminus \varpi(\Deltatilde(\RR))\).\label{it:boundary}
        \item If \(\deg\Delta\) is odd then \(\varpi(\Deltatilde(\RR))\) contains the pseudo-line of \(\Delta(\RR)\); in particular, \(\Deltatilde(\RR)\neq\emptyset\).\label{it:odd-deg-pseudo-line-in-im}
        \item The sets \(\pi(X(\RR))\) and \((\PP^2(\RR)\smallsetminus \pi(X(\RR)))\) are both nonempty and connected if and only if \(\Delta(\RR)\smallsetminus \varpi(\Deltatilde(\RR))\) is an oval. \label{item:connected-image-complement-1}
        \item The sets \(\pi(X(\RR))\) and \((\PP^2(\RR)\smallsetminus \pi(X(\RR))) \cup \Delta(\RR)\) are both nonempty, connected, and 2-dimensional if and only if \(\Delta(\RR)\) is an oval and \(\Deltatilde(\RR)=\emptyset\). \label{item:connected-image-complement-2}
    \end{enumerate}
\end{prop}
Before proving this proposition we show how it implies Theorem~\ref{thm:MultipleConicBundleStrOverR}.

\begin{proof}[Proof of Theorem~\ref{thm:MultipleConicBundleStrOverR}]
Assume that \(X_\eta \not\simeq X_\eta'\).
For any \(w\in \PP^2(\RR)\smallsetminus \Delta(\RR)\), since \([X_w] - [X'_w] = [X_{\eta}] - [X'_\eta]\) is nontrivial, \(X_w(\RR) \neq \emptyset\) if and only if \(X'_w(\RR) = \emptyset\). Further, \(X_{\Delta}(\RR) \to \Delta(\RR)\) is surjective for any conic bundle with discriminant \(\Delta\). Thus \(\pi'(X'(\RR)) = (\PP^2(\RR)\smallsetminus \pi(X(\RR))) \cup \Delta(\RR)\).
Since \(X(\RR),X'(\RR)\) are connected 3-manifolds and \(\pi,\pi'\) are conic bundles, the images \(\pi(X(\RR)),\pi'(X'(\RR))\) must be connected and 2-dimensional, so~Proposition~\ref{prop:ImageOfRealPoints}\eqref{item:connected-image-complement-2} implies the result.
    \end{proof}

\begin{proof}[Proof of Proposition~\ref{prop:ImageOfRealPoints}]\hfill

    \textbf{\eqref{it:boundary}:} Fix a Zariski affine open \(U = \Spec A\subset \PP^2\) where \(X_{U}\) can be viewed as a subvariety of \(\PP^2_A\), and fix a defining quadratic equation \(F\) for \(X_U\). Since \(\pi\) is smooth away from \(\Delta\), the signature of \(F\) is constant on real connected components of \(U(\RR)\smallsetminus (\Delta\cap U)(\RR)\).  Note that a connected component \(B\) of \(U(\RR)\smallsetminus (\Delta\cap U)(\RR)\) is in the image of \(X_U(\RR)\) if and only if \(F|_B\) is an indefinite quadratic form.
    
    Fix a real connected component \(c\) of \((\Delta\cap U)(\RR)\) and let \(B_1,B_2\) be the two connected components of \(U(\RR)\smallsetminus (\Delta\cap U)(\RR)\) such that \(B_1\cup B_2\cup c\) becomes connected, i.e., \(B_1,B_2\) are the immediate interior and immediate exterior of \(c\). The statement now follows from the following two claims.
    \begin{enumerate}
        \item The signatures of \(F|_{B_1}\) and \(F|_{B_2}\) differ by exactly one sign. In particular, we may assume that \(F|_{B_1}\) is indefinite.
        \item The quadratic form \(F|_{B_2}\) is also indefinite if and only if \(c\subset \varpi(\Deltatilde(\RR))\).
    \end{enumerate}

    By continuity, it suffices to prove both claims in a (Euclidean) open neighborhood around a point \(x_0\in c \subset(\Delta\cap U)(\RR)\).  In an open neighborhood of \(x_0\), we may change coordinates so that \(F\) is diagonal and two of the coefficients are nowhere vanishing (see~\cite{IOOV}*{Lemma 6.3}).  Thus, the sign of those coefficients are equal on \(B_1\) and \(B_2\).  Since the vanishing of the discriminant of \(F\) exactly agrees with \(\Delta\), the third coefficient must vanish with multiplicity \(1\) exactly along \(\Delta\).  In particular, it has a different sign on \(B_1\) than on \(B_2\).  This completes the proof of the first claim.

    To prove the second claim, we assume without loss of generality that the signature of \(F|_{B_1}\) is \((+,+,-)\).  By the first claim, the signature of \(F|_{B_2}\) is either \((+,+,+)\) or \((+,-,-)\).  Furthermore, the proof of the first claim above shows that the signature of \(F|_{B_2}\) is \((+,-,-)\) if and only if the signature of \(F|_{x_0}\) is \((+,0,-)\).  Finally, since  \(\Delta\) is smooth and \(\Deltatilde\) is geometrically integral,
    \(F|_{x_0}\) is a rank \(2\) quadratic form, and its vanishing has a smooth \(\RR\)-point if and only if \(x_0\in\varpi(\Deltatilde(\RR))\). This completes the proof of the second claim.

    \textbf{\eqref{it:odd-deg-pseudo-line-in-im}:}    
    Let \(c\subset\Delta(\RR)\) be the pseudo-line. Let \(B\) be the  unique connected component of \(\PP^2(\RR)\setminus\Delta(\RR)\) whose closure contains \(c\); for any point \(x_0\in c\) there is a (Euclidean) open neighborhood of \(x_0\) contained in \(c\cup B\). As argued in the proof of part~\eqref{it:boundary}, \(B\) is either contained in or disjoint from \(\pi(X(\RR))\).   
    Since \(X(\RR)\) is a 3-manifold, none of the connected components of its image are 1-manifolds. So, since \(c\subset\Delta(\RR)\subset\pi(X(\RR))\), we must have \(B\subset\pi(X(\RR))\).
    This shows that \(c\) is in the interior of \(\pi(X(\RR))\), and therefore~\eqref{it:boundary} implies \(c\subset\varpi(\Deltatilde(\RR))\).
    
\textbf{\eqref{item:connected-image-complement-1}:} 
        The loci \(\pi(X(\RR))\) and \(\PP^2(\RR)\smallsetminus \pi(X(\RR))\) are both nonempty and connected if and only if the boundary \(\partial(\pi(X(\RR)))\) is connected.
        Thus,~\eqref{item:connected-image-complement-1} follows from~\eqref{it:boundary}, the Jordan curve theorem, the classification of the real loci of smooth plane curves, and~\eqref{it:odd-deg-pseudo-line-in-im}.
    
    \textbf{\eqref{item:connected-image-complement-2}:} By~\eqref{it:boundary} we have \((\PP^2(\RR)\smallsetminus\pi(X(\RR)))\cup\Delta(\RR)=\overline{\PP^2(\RR)\smallsetminus \pi(X(\RR))} \sqcup \varpi(\Deltatilde(\RR))\), so this set is connected if and only if exactly one of \(\overline{\PP^2(\RR)\smallsetminus \pi(X(\RR))}\) or \(\varpi(\Deltatilde(\RR))\) is empty and the other is nonempty and connected.
    First, if \(\varpi(\Deltatilde(\RR))=\emptyset\) and \(\overline{\PP^2(\RR)\smallsetminus \pi(X(\RR))}\) is nonempty and connected, then \(\Delta(\RR)=\Delta(\RR)\smallsetminus\varpi(\Deltatilde(\RR))\). Since \(\overline{\PP^2(\RR)\smallsetminus \pi(X(\RR))}\) is nonempty and connected if and only if \(\PP^2(\RR)\smallsetminus \pi(X(\RR))\) is, the statement that \(\pi(X(\RR))\) and \((\PP^2(\RR)\smallsetminus \pi(X(\RR))) \cup \Delta(\RR)\) are both nonempty and connected if and only if \(\Delta(\RR)\) is an oval follows from~\eqref{item:connected-image-complement-1}.
    Furthermore, in this case exactly one of \(\pi(X(\RR))\) or \((\PP^2(\RR)\smallsetminus \pi(X(\RR))) \cup \Delta(\RR)\) is the closed disc enclosed by \(\Delta(\RR)\) and the other is a M\"obius band, so both are 2-dimensional.
    On the other hand, if \(\overline{\PP^2(\RR)\smallsetminus \pi(X(\RR))}=\emptyset\) and \(\varpi(\Deltatilde(\RR))\) is nonempty and connected, then \(\Delta(\RR) =(\PP^2(\RR)\smallsetminus\pi(X(\RR)))\cup\Delta(\RR)\) is 1-dimensional, so the left-hand condition of~\eqref{item:connected-image-complement-2} does not hold. Since \(\varpi(\Deltatilde(\RR))\) is nonempty if and only if \(\Deltatilde(\RR)\) is, this proves~\eqref{item:connected-image-complement-2}.
\end{proof}

\subsection{Degree \(4\) conic bundles over \(\mathbb{R}\)}\label{sec:RealDegree4}
For degree \(4\) conic bundles of the form \(Y_{\Deltatilde/\Delta}\), we give a topological interpretation of the triviality of the Brauer class \(\alphatildem{1}\) defined in Section~\ref{sec:alpha} (Proposition~\ref{prop:alphatilde-over-R}) and 
classify the cases from Theorem~\ref{thm:real} when the IJT obstruction or the real topological obstruction individually characterize rationality (Proposition~\ref{prop:real-equiv-rationality-criteria}), thereby extending results from~\cite{JJ23}.

The arguments in this section often rely on
Zeuthen's classification of the rigid isotopy classes of smooth quartic plane curves over \(\RR\), which states that \(\Delta(\RR)\) is one of the following: \(\emptyset\), one oval, two nested ovals, two non-nested ovals, three ovals, or four ovals \cite{Zeuthen1874,Klein1876}.

\begin{proposition}\label{prop:alphatilde-over-R}
    Let \(\varpi\colon \Deltatilde\to \Delta\) be a geometrically integral \'etale double cover of a smooth plane quartic over \(\RR\), and let \(Y_{\Deltatilde/\Delta}\) be the double cover defined in~\eqref{eq:DoubleCover}.
    Assume that \(\Ptildem{1}(\RR)\neq\emptyset\). Then the class \(\alphatildem{1}\in \Br \RR\) is  trivial if and only if the image of \(Y(\RR)\to\PP^2(\RR)\) contains the outside of \(\Delta(\RR)\).
\end{proposition}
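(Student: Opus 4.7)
The plan is to translate the condition $\alphatildem{1}=0$ into a statement about $\pi(Y(\RR))$ using a single line $\ell_x$, and then invoke the topology of the real points to relate this to the outside of $\Delta(\RR)$. The translation works because both sides of the desired equivalence are determined by which connected components of $\PP^2(\RR)\smallsetminus\Delta(\RR)$ lie in $\pi(Y(\RR))$.

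First, I would use Definition~\ref{def:alphatilde1} to identify $\alphatildem{1}$ with $[Y_w]\in\Br\RR[2]$ for any $w\in(\ell_x\smallsetminus(\Delta\cap\ell_x))(\RR)$, where $x\in\Ptildem{1}(\RR)$ is fixed. Since $Y_w$ is a smooth real plane conic and $\Br\RR[2]=\Z/2\Z$ is generated by the class of the anisotropic conic, one has $\alphatildem{1}=0$ if and only if $Y_w(\RR)\neq\emptyset$, i.e., if and only if $w\in\pi(Y(\RR))$. The smooth fibers of $\pi$ away from $\Delta$ vary continuously, so their signatures are locally constant, and thus the set $\pi(Y(\RR))\smallsetminus\Delta(\RR)$ is a union of connected components of $\PP^2(\RR)\smallsetminus\Delta(\RR)$. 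In particular, the condition $w\in\pi(Y(\RR))$ is independent of the chosen smooth real point $w\in\ell_x(\RR)$, consistent with Theorem~\ref{thm:alphatilde1}.

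Next, I would prove the key topological fact that every real line $\ell\subset\PP^2_{\RR}$ meets the outside of $\Delta(\RR)$ in an open arc. The real locus $\ell(\RR)$ represents the nontrivial class in $\pi_1(\PP^2(\RR))=\Z/2\Z$, while the complement of the outside in $\PP^2(\RR)$ is a disjoint union of closed discs bounded by the ovals of $\Delta(\RR)$ (by the classification of real loci of smooth plane curves, see \cite{Mangolte-real}*{Section~2.7}), which is simply connected. Therefore $\ell(\RR)$ cannot be contained in this complement, so it meets the open outside in a nonempty (necessarily open) subset of $\ell(\RR)$, which is automatically disjoint from the finite set $\ell\cap\Delta$.

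Combining these gives both implications. If the outside is contained in $\pi(Y(\RR))$, then choosing any $w\in\ell_x(\RR)\cap(\text{outside})$ (which exists by the topological step and lies off $\Delta(\RR)$) yields $w\in\pi(Y(\RR))$, so $\alphatildem{1}=0$ by the first step. Conversely, if $\alphatildem{1}=0$, then every smooth real point of $\ell_x$ lies in $\pi(Y(\RR))$; picking such a point in the outside shows that the outside meets $\pi(Y(\RR))\smallsetminus\Delta(\RR)$, and since the latter is a union of connected components of $\PP^2(\RR)\smallsetminus\Delta(\RR)$ and the outside is one such component, the entire outside lies in $\pi(Y(\RR))$. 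The main obstacle is the topological input ensuring that every real projective line meets the non-orientable component of $\PP^2(\RR)\smallsetminus\Delta(\RR)$; after that, the rest is a direct application of Theorem~\ref{thm:alphatilde1} and the Brauer-theoretic description of $\Br\RR[2]$.
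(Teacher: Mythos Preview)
Your proposal is correct and follows essentially the same approach as the paper: fix $x\in\Ptildem{1}(\RR)$, use Theorem~\ref{thm:alphatilde1}/Definition~\ref{def:alphatilde1} to identify $\alphatildem{1}$ with $[Y_w]$ for any $w\in(\ell_x\smallsetminus\Delta)(\RR)$, and appeal to the topological fact that $\ell_x(\RR)$ meets the outside of $\Delta(\RR)$. The paper simply asserts this last fact as the ``key observation,'' whereas you supply the justification via $\pi_1(\PP^2(\RR))$; your added remark that $\pi(Y(\RR))\smallsetminus\Delta(\RR)$ is a union of components of $\PP^2(\RR)\smallsetminus\Delta(\RR)$ makes explicit what the paper compresses into ``meets (and consequently contains).''
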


When \(\Delta(\RR)\) is not one oval or two nested ovals, then rationality of \(Y\) implies that the image of \(Y(\RR)\to\PP^2(\RR)\) contains the outside of \(\Delta(\RR)\) by \cite{JJ23}, so Proposition~\ref{prop:alphatilde-over-R} implies \(\alphatildem{1}=0\). However, when \(\Delta(\RR)\) is two nested ovals, there are examples where \(Y\) is rational and \(\alphatildem{1}\neq 0\), as mentioned in Remark~\ref{rem:Real}\ref{it:rem-Real-alphatilde1-nonzero}. In \cite{JJ23}*{Example 4.10(1)} the image of \(Y(\RR)\) is the closed annulus enclosed by the two ovals, and in \cite{JJ23}*{Example 4.10(2)} the image of \(Y(\RR)\) is the union of the annulus and the disc enclosed by the interior oval.
\begin{proof}Let \(x \in \Ptildem{1}(\RR)\); write \(D_x\in \bPic^4_{\Deltatilde}\) for the associated effective divisor (see~\eqref{def:Ptilde1}) and \(\ell(x)\) for the line in \(\PP^2\) spanned by \(\piDelta_* D_x\).  The key observation is that \(\ell(x)(\RR) \smallsetminus (\ell(x)\cap \Delta)(\RR)\) meets the outside of \(\Delta(\RR)\).  If \(\alphatildem{1} = 0\), then \(Y_{\ell(x)}(\RR) \to \ell(x)(\RR)\) is surjective by Theorem~\ref{thm:alphatilde1}, and hence the image of \(Y(\RR)\to\PP^2(\RR)\) meets (and consequently contains) the outside of \(\Delta(\RR)\).
    Conversely, if the image of \(Y(\RR)\to\PP^2(\RR)\) contains the outside of \(\Delta(\RR)\), then there exists \(w \in \ell(x)(\RR)\smallsetminus (\ell(x)\cap \Delta)(\RR)\) such that \(Y_w(\RR) \neq \emptyset\).  Thus \(\alphatildem{1} = 0\) by Theorem~\ref{thm:alphatilde1}.
\end{proof}

\begin{prop}\label{prop:real-equiv-rationality-criteria}
Let \(\varpi\colon \Deltatilde\to \Delta\) be a geometrically integral \'etale double cover of a smooth plane quartic over \(\RR\), and let \(Y_{\Deltatilde/\Delta}\) be the double cover defined in~\eqref{eq:DoubleCover}. Assume that \(\Delta(\RR)\) is not one oval.

If \(\Delta(\RR)\) is two nested ovals, three ovals, or four ovals, then the four conditions of Theorem~\ref{thm:real} are also equivalent to
    \begin{enumerate}\setcounter{enumi}{4}
        \item\label{item:2nestedovals-connected} \(Y(\RR)\) is connected.
    \end{enumerate}
    
    If \(\Delta(\RR)\) is two nested ovals or four ovals then~\eqref{item:2nestedovals-connected} is additionally equivalent to
        \begin{enumerate}\setcounter{enumi}{5}
            \item\label{item:2nestedovals-IJT} The IJT obstruction vanishes for \(Y\).
    \end{enumerate}
    Moreover, the assumptions above are the strongest possible, i.e., if \(\Delta(\RR)\) is empty or two non-nested ovals then connectedness is not equivalent to rationality, and if \(\Delta(\RR)\) is empty, two non-nested ovals, or three ovals then vanishing of the IJT obstruction is not equivalent to rationality.
\end{prop}

\begin{remark}
    The part of the proposition that is new is the case of two nested ovals; the rest follows from prior work.  If \(\Delta(\RR)\) is four ovals, then \(Y\) is always rational~\cite{JJ23}*{Theorem 1.2(4)} so the theorem vacuously holds. If \(\Delta(\RR)\) is  three ovals, then the result that connectedness of \(Y(\RR)\) characterizes rationality is~\cite{JJ23}*{Theorem 1.2(3)}. That connectedness fails to characterize rationality if \(\Delta(\RR)\) is empty or two non-nested ovals is \cite{JJ23}*{Examples 4.2 and 4.4}. That the IJT obstruction fails to characterize rationality if \(\Delta(\RR)\) is two non-nested ovals is shown by~\cite{FJSVV}*{Theorem 1.4(i)}; if \(\Delta(\RR)\) is empty or three ovals, this is shown by~\cite{JJ23}*{Examples 4.1 and 4.6}.
\end{remark}
\begin{proof}
Given the remark above, we restrict ourselves to the case that \(\Delta(\RR)\) is two nested ovals.
In this case, \eqref{item:2nestedovals-IJT}\(\Rightarrow\)\eqref{item:2nestedovals-connected} by \cite{JJ23}*{Corollary 3.9}. This shows that~\eqref{item:2nestedovals-IJT} is equivalent to~\eqref{it:IJTandConnected}.

    It remains to show that \eqref{item:2nestedovals-connected}\(\Rightarrow\)\eqref{item:2nestedovals-IJT}. Assume for the sake of contradiction that~\eqref{item:2nestedovals-connected} holds and that \(Y\) has an IJT obstruction to rationality over \(\RR\). Then \(\piQuadricSurface\) does not have a section by Theorem~\ref{thm:real}, so \(\piQuadricSurface\) is not surjective on \(\RR\)-points by \cite{Witt-quadratic-forms}*{Satz 22}.  Thus there exists a point \(t \in \PP^1(\RR)\) such that \(Y_t(\RR)=\emptyset\), and so is defined by a definite quadratic form. In particular, there exists \(z\in\Gamma_t(\RR)\) such that \([\calF(Y/\PP^1)_z]\) is equal to the unique nontrivial element \(\mathbb{H}\in \Br \RR\). By Proposition~\ref{prop:construction-intersection-of-quadrics} and Corollary~\ref{cor:intro-inline}, 
    there exists an irrational complete intersection \(Z\subset\PP^5\) of two quadrics, containing a conic \(C\), such that \(\psi\colon\Bl_C Z\to\PP^2\) is a conic bundle with
    \[
    [(\Bl_C Z)_{\eta}] = [Y_\eta] + \mathbb H \in \Br \kk(\PP^2),\]
    so the image \(\psi((\Bl_C Z)(\RR))\) is equal to \((\PP^2(\RR) \smallsetminus \psi(Y(\RR))) \cup \Delta(\RR)\).
    Since \(Y(\RR)\) and \((\Bl_C Z)(\RR)\) are both 3-manifolds, the images \(\piX(Y(\RR))\) and \(\psi((\Bl_C Z)(\RR))\) do not have any connected components that are 1-manifolds.
    Thus, since \(Y(\RR)\) is connected and \(\Delta(\RR)\) is two nested ovals, the image \(\piX(Y(\RR))\) must be the region (diffeomorphic to a closed annulus) enclosed by the two ovals of \(\Delta(\RR)\). This implies \(Z(\RR)\) is disconnected, so Krasnov's real isotopy classification of 3-dimensional intersections of two quadrics \cite{krasnov-biquadrics}*{Theorem 5.4} (see also \cite{HT-intersection-quadrics}*{Sections 11.2 and 11.4}) implies that \(Z(\RR)\) is diffeomorphic to \(\mathbb S^3 \sqcup \mathbb S^3\) and that the Prym curve \(\Gamma\) has 6 real Weierstrass points. On the other hand, since we've assumed that \(\piQuadricSurface(Y(\RR))\subsetneq\PP^1(\RR)\) is connected, the fact that \(\Gamma\) has 6 real Weierstrass points implies that \(\piQuadricSurface\colon Y\to\PP^1\) must have a smooth fiber with signature \(({+,+,-,-})\). After a coordinate change on \(\PP^1\) we may assume that the fiber of \(\piQuadricSurface\) over \([1:0]\) has signature \(({+,+,-,-})\) \cite{FJSVV}*{Theorem 4.5}, i.e., that the quadratic form \(Q_1\) has signature \(({+,+,-})\). Then \((Q_1\geq 0) = \PP^2(\RR)\smallsetminus\mathbb D^2\subset\PP^2(\RR)\), and this region is contained in \(\piX(Y(\RR))\) by \cite{JJ23}*{Lemma 2.8}. Since \(\PP^2(\RR)\smallsetminus\mathbb D^2\) is non-orientable, as it is a M\"obius band, this contradicts our earlier conclusion that \(\piX(Y(\RR))\) is a closed annulus, so we have shown \eqref{item:2nestedovals-connected} implies~\eqref{item:2nestedovals-IJT}.
\end{proof}


\bibliographystyle{alpha}
\bibliography{references}

\end{document}